\documentclass[a4paper,12pt]{amsart}

 \usepackage{amssymb,amsthm}
 \usepackage{enumerate} 
 \setlength{\textwidth}{15cm}
\setlength{\oddsidemargin}{1cm}
\setlength{\evensidemargin}{1cm}
\setlength{\textheight}{21cm}
\setlength{\parskip}{2mm}
\setlength{\parindent}{0em}
\setlength{\headsep}{1.5cm}
\usepackage{diagbox}

\usepackage{amscd} % allows index generation
\usepackage{graphicx,color} % standard LaTeX graphics tool
 % when including figure files
%\usepackage{multicol} % used for the two-column index
\numberwithin{equation}{section}

\usepackage{url} % <-- Para p\'aginas web o similar: \url{...}

\newtheorem{theorem}{Theorem}[section]

\newtheorem{proposition}[theorem]{Proposition}

\theoremstyle{definition}

\newtheorem{example}[theorem]{Example}
\newtheorem{remark}[theorem]{Remark}

\usepackage{relsize}

\def\r{\mathbb R}

\def\r{\mathbb R}
\def\rr{\mathbb L}
 
\def\t{\mathbf t}
\def\n{\mathbf n}

\def\b{\mathbf b}

\begin{document}

\title{Maximal translation surfaces in Lorentz-Minkowski space }
\author{Rafael L\'opez}
\address{ Departament of Geometry and Topology\\ University of Granada. 18071 Granada, Spain}
\email{rcamino@ugr.es}
 \keywords{maximal surface, translation surface, Scherk surface, Frenet curve, pseudo-null curve}
 \subjclass{53A10, 53C42, 53C50}

\begin{abstract} A translation surface in Lorentz-Minkowski space $\rr^3$ is a surface defined as the sum of two spatial curves. In this paper we present a classification of maximal surfaces of translation type.  We prove that if a generating curve is planar, then    the other generating curve is also planar.  We give a full description of these surfaces. In case that both curves are of Frenet type, we generalize the Scherk surfaces. In case that a curve is a pseudo-null curve, we obtain new examples of maximal surfaces which have not counterparts in Euclidean space.
\end{abstract}

\maketitle
%\tableofcontents

%%%%%%%%%%%%%%%%%%
\section{Introduction  and statement of the result}
%%%%%%%%%%%%%%%%%%%%%%%%%%%%%
 
  Let $z\colon\Omega\to\r$ be a smooth function, $z=z(x,y)$, defined in a domain $\Omega$ of $\r^2$. The graph $z=z(x,y)$     is a minimal surface in $\r^3$ if  $z$ satisfies 
 $$(1+z_y^2)z_{xx}-2z_xz_yz_{xy}+(1+z_x^2)z_{yy}=0.$$
If we study the solutions of this equation by separation of variables, $z(x,y)=f(x)+g(y)$, for smooth functions $f\colon I\subset\r\to\r$ and $g\colon J\subset\r\to\r$, then the minimal surface equation reduces into
$$(1+g'^2)f''=(1+f'^2)g''.$$
This equation has only two types of solutions. First, both functions $f$ and $g$ are linear functions in its variable and  the surface is a plane. The second solution is 
\begin{equation}\label{sc}
z(x,y)=\frac{1}{c}\left(\log \cos(cx)-\log \cos(cy)\right),\quad c>0.
\end{equation}
This surface is called the Scherk surface \cite{sc}.  

A surface $z=f(x)+g(y)$ can be also viewed as the sum of two curves of $\r^3$, namely, $x\mapsto (x,0,f(x))$ and $y\mapsto (0,y,g(y))$. More generally, a surface $M$ in $\r^3$ is said to be a {\it translation surface} if $M$ is the sum of two  curves of $\r^3$.  The notion of translation surface is not metric but affine. Notice that the   two curves that generate the Scherk surface are contained in orthogonal planes. In \cite{di},  the authors proved that if a generating curve of a translation minimal surface is contained in a plane then the other is also planar. However, both curves are contained in planes which are not necessarily orthogonal. These surfaces belong to a large family of surfaces discovered by Scherk in XIXth century, where \cite{sc} is a particular example: see also \cite{ni}. Since the notion of translation surface is not metric but it only requires of an operation of group in the space, the study of translation surfaces with zero mean curvature, and also with constant mean curvature, has been extended to other ambient spaces. Without to give a full list of references, we refer to the reader to \cite{ay,hl2,Lopez,lomu,mi}.

Until very recently, the Scherk surfaces were the only known minimal translation surfaces in Euclidean space. 
However, the author in collaboration with O. Perdomo (CCSU) and T. Hasanis (Ioannina), found a plethora of new examples of minimal translation surfaces  \cite{hl,lp}. In fact,   a method was given to obtain all minimal translation surfaces.  An interesting example is the helicoid which it is obtained  by sum of a circular helix by itself. For this surface, the generating curves are not planar.

In this paper, we consider the analogue problem in Lorentz-Minkowski space $\rr^3$. The space $\rr^3$ is  the vector space  $\r^3$  endowed with the metric $\langle,\rangle =dx^2+dy^2-dz^2$, where $(x,y,z)$ denote the standard coordinates of $\r^3$. As in Euclidean space, we can consider  translation surfaces in $\rr^3$, that is, surfaces that are the sum of two spatial curves  $\alpha:I\subset\r \rightarrow\rr^3$, $\beta:J\subset\r\rightarrow\rr^3$ and whose parametrization is
\begin{equation}\label{tt}
X(s,t)=\alpha(s)+\beta(t).
\end{equation}
Regularity of the surface is equivalent to $\alpha^\prime(s)\times\beta'(t)\not=0$ for all $(s,t)\in I\times J$. In $\rr^3$ we consider spacelike surfaces, that is, surfaces whose   induced metric is Riemannian. Spacelike surfaces with zero mean curvature $H=0$ at every point are called maximal surfaces because they maximize locally the surface area functional. 

First  examples of maximal translation surfaces are (spacelike) planes. Similarly as in $\r^3$, it is posible to find maximal surfaces of $\rr^3$ of type $z=f(x)+g(y)$, although it is also necessary to consider surfaces of type  $y=f(x)+g(y)$ (or $x=f(y)+g(z)$). This is   because the $xy$-plane  is a spacelike plane,  but the $xz$-plane  and the $yz$-plane are timelike. These surfaces were obtained in \cite{wo} and they are the analogs of the Scherk surface \eqref{sc}, namely, 
\begin{equation}\label{sc2}
\begin{split}z(x,y)&=\frac{1}{c}\left(\log \cosh{cx}-\log \cosh{cy}\right)=\frac{1}{c} \log \frac{\cosh{cx}}{\log \cosh{cy}},\quad c>0,\\
y(x,z)&=\frac{1}{c}\left(\log \sinh{cz}-\log\cos{cx}\right)=\frac{1}{c} \log \frac{\sinh{cz}}{\log \cos{cz}},\quad c>0.
\end{split}
\end{equation}
 However, the question of classification of all maximal translation  surfaces in $\rr^3$ is not yet answered. For instance, until now no  examples are known when the generating curves are not planar. Even in case that the generating curves were planar, no examples are known if the planes are not orthogonal, neither examples when one (or two) generating curves are contained in lightlike planes. Notice that a spacelike planar curve can be contained in a timelike or in a lightlike plane.  In this paper we give new progress in this problem, obtaining new examples of maximal translation  surfaces and thus, we enrich with  new examples   the class of maximal surfaces of $\rr^3$. 

An interesting observation is that, contrary to what one might think, no definitive results are obtained by simply repeating the same arguments as in Euclidean space. In order to employ the same techniques than in \cite{hl,lp}, Frenet equations of curves have to be used. Here it appears a  difference with the Euclidean space. For spacelike curves of $\rr^3$ where the normal vector is spacelike or timelike, the Frenet equations are similar to that of $\r^3$, after the corresponding changes of signs and similar notions of curvature $\kappa$ and torsion $\tau$. These curves are called {\it curves of Frenet type}. However, in $\rr^3$ the normal vector of a spacelike curve can be lightlike and the Frenet equations are completely different.  These curves are called {\it pseudo-null curves}.  Even under the situation that both curves are of Frenet type, it is easy to realize that  it is not possible to do, step-by-step, the same arguments than in Euclidean space.  The reason is that in \cite{hl,lp} it is used strongly that self-adjoint maps in $\r^3$ are diagonalizable.  In contrast, self-adjoint maps in $\rr^3$ may be not diagonalizable.  This fact shows that the problem of classification of maximal translation  surfaces in $\rr^3$ is not a simple `change of sign', as sometimes happens when a problem of differential geometry is moved from the Euclidean to the Lorentzian ambient space. In consequence, we have the following
\begin{quote}{\sc Open problem.} Give a classification/description of all maximal translation surfaces in Lorentz-Minkowski space.
\end{quote}

We now present the results of this paper in the context of the above classification problem.  
The main result is the extension of  \cite{di}. 

\begin{theorem} \label{t1}
Let $M$ be a maximal translation  surface in $\rr^3$. If a generating curve is planar, then the other generating curve is also planar.  
\end{theorem}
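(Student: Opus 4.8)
The plan is to reduce the maximality condition to a single scalar identity with separated variables and then to run a linear‑algebra dichotomy on it. Writing $X(s,t)=\alpha(s)+\beta(t)$ we have $X_s=\alpha'$, $X_t=\beta'$, $X_{ss}=\alpha''$, $X_{st}=0$ and $X_{tt}=\beta''$. Because the surface is spacelike, its first fundamental form is positive definite, so $E=\langle\alpha',\alpha'\rangle>0$, $G=\langle\beta',\beta'\rangle>0$ and $EG-F^2>0$; in particular \emph{both} generating curves are spacelike. The unit normal is $N=(\alpha'\times\beta')/\sqrt{EG-F^2}$, and since $f=\langle X_{st},N\rangle=0$ the condition $H=0$ collapses to $eG+gE=0$, that is
\[
\det(\alpha'',\alpha',\beta')\,\langle\beta',\beta'\rangle+\det(\beta'',\alpha',\beta')\,\langle\alpha',\alpha'\rangle=0\qquad(\ast).
\]
The first term involves $\beta$ only through $\beta'$ and $\alpha$ only through $\alpha',\alpha''$, which is exactly the structure that lets the planarity of $\alpha$ be exploited.

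Next I would use the hypothesis. If $\alpha$ lies in a plane, then $\alpha'(s),\alpha''(s)$ belong to a fixed $2$‑plane $V$. Fixing a basis $\{v_1,v_2,v_3\}$ of $\r^3$ with $V=\mathrm{span}\{v_1,v_2\}$ and writing $\alpha'=(a_1,a_2,0)$, $\alpha''=(a_1',a_2',0)$, $\beta'=(b_1,b_2,b_3)$, $\beta''=(b_1',b_2',b_3')$ in these coordinates (the third entries of $\alpha',\alpha''$ vanish since they lie in $V$), expansion of the two determinants along the third row turns $(\ast)$ into the separated identity
\[
W_\alpha(s)\,[\,b_3G\,](t)+[\,Ea_1\,](s)\,[\,b_3'b_2-b_3b_2'\,](t)+[\,Ea_2\,](s)\,[\,b_3b_1'-b_3'b_1\,](t)=0\qquad(\ast\ast),
\]
where $W_\alpha=a_1'a_2-a_1a_2'$. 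Thus $(\ast\ast)$ has the form $\sum_{i=0}^{2}P_i(s)Q_i(t)=0$ with $P_0=W_\alpha$, $P_1=Ea_1$, $P_2=Ea_2$ and $Q_0=b_3G$, and the goal becomes to exhibit a fixed nonzero linear functional annihilating $\beta'$.

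The heart of the argument is the elementary separation‑of‑variables lemma: fixing $t$, the triple $(Q_0,Q_1,Q_2)(t)$ is a constant‑coefficient linear relation among $P_0,P_1,P_2$. If $\{P_0,P_1,P_2\}$ are linearly independent, then $Q_0=b_3G\equiv0$, and since $G>0$ this forces $b_3\equiv0$, so $\beta$ lies in a plane parallel to $V$. If they are dependent, the relation space is nonzero and $(Q_0,Q_1,Q_2)$ is proportional to a \emph{fixed} vector $(\lambda_0,\lambda_1,\lambda_2)$; comparing the last two entries gives $b_3b_1'-b_3'b_1=c\,(b_3'b_2-b_3b_2')$ with $c$ constant, equivalently $b_3(b_1+cb_2)'-b_3'(b_1+cb_2)=0$. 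Hence $(b_1+cb_2)/b_3$ is constant wherever $b_3\neq0$, giving a fixed linear relation $b_1+cb_2=kb_3$; this confines $\beta'$ to a fixed plane, so $\beta$ is planar. The boundary sub‑cases (when $\lambda_1$ or $\lambda_2$ vanishes, or when $W_\alpha\equiv0$, i.e. $\alpha$ is a straight line) are handled identically and likewise produce a fixed linear relation among $b_1,b_2,b_3$.

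I expect the main obstacle to be the dependent case together with the distinctively Lorentzian feature that $V$ may be spacelike, timelike \emph{or lightlike}. The identity $(\ast)$ and its separated form $(\ast\ast)$ are purely affine and survive unchanged, but to normalize $\alpha$ inside $V$ and to classify the dependencies one must treat the three causal types separately, using a null basis of $V$ in the degenerate case where arc length and the usual trigonometric or hyperbolic normalizations are unavailable — precisely the point where the Euclidean reasoning of \cite{di} cannot be transcribed verbatim. A secondary technical nuisance is propagating the relation $b_1+cb_2=kb_3$ across the zeros of $b_3$, for which I would invoke continuity (or analyticity of the curves) to extend the fixed plane to the whole parameter interval.
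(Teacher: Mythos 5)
Your argument is correct, and it takes a genuinely different route from the paper's. The paper never proves Theorem \ref{t1} in one stroke: it splits according to the causal character of $\alpha''$ and $\beta''$, runs the Frenet machinery in each case (Theorems \ref{t31}, \ref{t32}, \ref{t34} when a curve is pseudo-null; Theorem \ref{t43} when both are of Frenet type), repeatedly differentiates the identity $\kappa_\alpha\langle\b_\alpha,\t_\beta\rangle=\kappa_\beta\langle\t_\alpha,\b_\beta\rangle$ in $s$ and $t$, solves the resulting ODE $(\kappa_\alpha'/\kappa_\alpha)'+\epsilon_\alpha\kappa_\alpha^2=0$ explicitly (Prop.~\ref{pr3}), and only then substitutes the explicit generating curves into $H=0$ to extract relations forcing $(\beta',\beta'',\beta''')=0$. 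Your route --- write $H=0$ as the affine identity $(\ast)$, pass to linear coordinates adapted to the plane of $\alpha$, and run a separation-of-variables/Wronskian dichotomy on $\sum_i P_i(s)Q_i(t)=0$ --- bypasses Frenet frames entirely, needs no arc-length normalization, and treats spacelike, timelike and lightlike planes uniformly, so the pseudo-null case is not special; the metric enters only through $E,G>0$. What it does not deliver is what the paper's longer case analysis produces as a by-product: the explicit parametrizations of the surfaces, which are the content of Theorems \ref{t2}, \ref{t31}, \ref{t32} and \ref{t34}. Two points to tighten. First, in the dependent case the annihilator of $\{P_0,P_1,P_2\}$ can be two-dimensional; since $P_1=Ea_1$, $P_2=Ea_2$ with $E>0$, this happens exactly when $a_1,a_2$ are proportional, i.e.\ $\alpha$ is a straight line (a case the paper discards at the outset because it forces the surface to be a plane). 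There $(Q_0,Q_1,Q_2)$ is not proportional to a fixed vector but satisfies the single relation $Q_1+\lambda Q_2=0$, which still gives $b_3(b_2-\lambda b_1)'-b_3'(b_2-\lambda b_1)=0$ and hence planarity --- so the conclusion survives, but the statement ``proportional to a fixed vector'' should be restricted to the rank-two case. Second, $fg'-f'g\equiv 0$ for smooth $f,g$ yields proportionality only on intervals where one of them does not vanish; continuity alone does not propagate the constant across a zero of $b_3$ (analyticity would, and so does the purely local nature of the paper's classification, so this is harmless here but should be stated as such).
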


This result holds for all types of causal character of the  normal vectors of the curves. Recall that a pseudo-null curve is contained in a plane. Therefore, Theorem \ref{t1} solves and completes the classification problem proposed previously when one generating curve is a   pseudo-null curve. 

After the proof of Thm. \ref{t1}, the second goal of this paper is the description of  all these surfaces. When both generating curves are of Frenet type, we obtain   the analogues of the Scherk minimal surfaces. 

\begin{theorem}[Scherk type surfaces]\label{t2}  Suppose that  $M$ is a maximal translation surface generated by two spacelike curves of Frenet type. Then the surface is a plane or, after a rigid motion of $\rr^3$, the surface is one of the following types:
\begin{align}
\Psi(s,t)&=\left(s+t\cosh\theta, \frac{1}{c}\log\frac{\cos(cs)}{\cos(ct)},t\sinh\theta\right),\label{para1}\\
  \Psi(s,t)&=\left(s+t\cos\theta,-t\sin\theta,\frac{1}{c} \log\frac{\cosh cs}{\cosh ct}\right),\label{para2}\\
  \Psi(s,t)&=\left(s+t \sinh (\theta ),-\frac{1}{c}\frac{\log (\sinh (c t))}{\log (\cos (c s))},t \cosh (\theta )\right),\label{para3}
  \end{align}
 where $c>0$ and $\theta\in\r$. These surfaces are called of  Scherk type.  
  \end{theorem}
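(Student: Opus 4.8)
The plan is to turn the condition $H=0$ into a single scalar identity in which the two parameters separate, and then to extract from it, via the Frenet equations, first that both curves are planar and afterwards the explicit form of the curvature functions and of the relative position of the two planes. First I would reparametrize each spacelike generating curve by arc length, so that $E=\langle\alpha',\alpha'\rangle=1$ and $G=\langle\beta',\beta'\rangle=1$; since $X_{st}=0$, the mixed coefficient $\langle X_{st},\mathbf{N}\rangle$ of the second fundamental form vanishes, and $H=0$ reduces to $\langle\alpha'',\mathbf{N}\rangle\,G+\langle\beta'',\mathbf{N}\rangle\,E=0$, that is
\[
\langle\alpha''(s),\,\alpha'(s)\times\beta'(t)\rangle+\langle\beta''(t),\,\alpha'(s)\times\beta'(t)\rangle=0
\]
for all $(s,t)$, where $\times$ denotes the Lorentzian cross product. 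Writing $\alpha''=\kappa_\alpha\mathbf{n}_\alpha$, $\beta''=\kappa_\beta\mathbf{n}_\beta$ and using $\mathbf{n}\times\mathbf{t}\parallel\mathbf{b}$, this becomes the separated relation
\[
\varepsilon_\alpha\,\kappa_\alpha(s)\,\langle\mathbf{b}_\alpha(s),\beta'(t)\rangle+\varepsilon_\beta\,\kappa_\beta(t)\,\langle\alpha'(s),\mathbf{b}_\beta(t)\rangle=0,
\]
the signs $\varepsilon_\alpha,\varepsilon_\beta=\pm1$ recording the causal characters of the (spacelike or timelike) normals, since the curves are of Frenet type.

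Second, I would differentiate this identity in $s$ and in $t$ and feed in the Lorentzian Frenet equations (with $\mathbf{b}_\alpha'=-\tau_\alpha\mathbf{n}_\alpha$, and so on, up to the signs dictated by the causal type). Because the $\alpha$-data depends only on $s$ and the $\beta$-data only on $t$, each differentiation produces a new identity pairing one frame against the other; iterating and using the linear independence of a frame forces the torsions to vanish, $\tau_\alpha\equiv\tau_\beta\equiv0$, outside the trivial case where the surface is a plane. At this point both curves are planar, in agreement with Theorem \ref{t1}, and the remaining datum is the constant relative position of the two planes. This is encoded by a fixed endomorphism $A$ carrying one frame to the other, which the identity forces to be self-adjoint for $\langle,\rangle$. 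Here lies the main obstacle and the genuine departure from the Euclidean arguments of \cite{hl,lp}: $A$ need not be diagonalizable. I would therefore split according to the Lorentz-canonical form of $A$ --- a boost (real eigenvalues, hyperbolic angle), an elliptic rotation in a timelike plane (complex eigenvalues, circular angle), and the non-diagonalizable parabolic block supported on a lightlike line --- the last being precisely the new lightlike-plane phenomenon with no Euclidean counterpart.

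Third, in each case the planar reduction writes $M$, after a rigid motion of $\rr^3$, as a graph $z=f(x)+g(y)$ or $y=f(x)+g(z)$ as in \cite{wo}, and $H=0$ becomes the scalar Scherk-type equation $\frac{f''}{1\pm f'^2}=\mp\frac{g''}{1\pm g'^2}=\mathrm{const}$, whose signs are fixed by the causal characters. Integrating gives $f$ and $g$ as logarithms of $\cos$, $\cosh$ or $\sinh$, and a second integration recovers the curves; combining with the boost/rotation/parabolic normal form of $A$ to fix the parameter $\theta$ between the planes yields exactly the three families \eqref{para1}, \eqref{para2} and \eqref{para3}. Finally I would verify that the spacelike (Riemannian induced metric) condition holds on the stated domains and that no further normalization remains, so that these three types, together with the plane, exhaust the Frenet-type maximal translation surfaces. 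The step I expect to be hardest is the torsion-killing together with the treatment of the parabolic, non-diagonalizable $A$: this is where the Lorentzian problem is not a mere change of sign, and where the surface \eqref{para3}, built on a lightlike plane, must be produced and shown to be spacelike.
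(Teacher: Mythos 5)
Your derivation of the separated identity \eqref{eq40} is indeed the paper's starting point, but the next step --- that iterated differentiation against the Frenet equations ``forces the torsions to vanish'' outside the trivial planar case --- is false, and this is exactly where the problem refuses to collapse. Differentiating \eqref{eq40} repeatedly only yields the system \eqref{ta}, whose consequence is Theorem \ref{t41}: $\kappa^2\tau$ and $\Sigma/\tau+\tau$ are constant along each generating curve, not $\tau\equiv 0$. Circular helices satisfy these constraints with $\tau\neq 0$, and Proposition \ref{pr41} and Theorem \ref{t42} exhibit genuine zero-mean-curvature translation surfaces $\alpha(s)+\alpha(t)$ with $\alpha$ a non-planar spacelike helix, possessing spacelike (hence maximal) regions. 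So no argument can kill the torsion in general; what the paper actually proves (Theorem \ref{t43}) is only the conditional statement that \emph{if one} generating curve is planar \emph{then} so is the other, and the classification of Theorem \ref{t2} is carried out under that planarity hypothesis. Your proposal, by resting on $\tau_\alpha\equiv\tau_\beta\equiv 0$ as a derived fact, proves too much, and this is a genuine gap at the central step.

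The endgame also diverges from what actually happens. Once both curves are planar, the paper neither encodes their relative position by a self-adjoint endomorphism nor meets a parabolic, non-diagonalizable block: it first derives the ODE \eqref{kk}, $(\kappa'/\kappa)'+\epsilon\kappa^2=0$, whose solutions (Proposition \ref{pr3}) pin down each curve, up to rigid motion, as one of the explicit curves \eqref{cu1}--\eqref{cu3} or the exceptional $\kappa(s)=1/s$ curve; it then writes $\beta=A\cdot\sigma$ with $A$ a Lorentz orthogonal matrix and determines the entries of $A$ directly from the separated form of \eqref{eq1}, running through the ten pairs of cases. In particular \eqref{para3} is not ``built on a lightlike plane'': all three families are generated by curves lying in the spacelike plane $z=0$ or in timelike planes, related by a boost or a Euclidean rotation, and lightlike planes occur only for pseudo-null curves, which are excluded once both curves are of Frenet type. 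Finally, reducing to a graph $z=f(x)+g(y)$ as in \cite{wo} would lose the parameter $\theta$: the whole point of \eqref{para1}--\eqref{para3} is that the two planes need not be orthogonal, so the rigid motion $A$ must be kept explicit throughout the computation.
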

Notice that if $\theta=0$ in \eqref{para2} and \eqref{para3}, we obtain the surfaces \eqref{sc2}, while if $\theta=0$ in \eqref{para1} we obtain  the plane $z=0$.

When a generating curve is pseudo-null curve, we also obtain the parametrizations of the surfaces: see the explicit parametrizations in Sect. \ref{s3}.

By the variety of cases according to the causal character of the normal vector of the generating curves,  the organization of the paper is the following. In Sect. \ref{s3} we study the case that a generating curve is a pseudo-null curve. The case that  a generating curve is of Frenet type is considered in Subsect. \ref{s31}, otherwise    in Subsect. \ref{s32}. The parametrizations of the surfaces are obtained in Thms. \ref{t31}, \ref{t32} and \ref{t34} .

In Sects. \ref{s4} and \ref{s5}, we study the case that both generating curves are of Frenet type.  In Thm. \ref{t41} we prove that both  generating curves satisfy that   $\kappa^2\tau$ is a constant function. An interesting case is when a generating curve is a circular helix, proving that  the other curve is a rigid motion of the first one (Thm. \ref{t42}).  This gives examples of translation maximal surfaces generating by non-planar curves. Section \ref{s5} is devoted to the case that one generating curve is planar. In Thm. \ref{t43}, it is proved that if a generating curve is planar, then the other is also planar.  This result together that of Sect. \ref{s3} proves Thm. \ref{t1}. In the rest of Sect. \ref{s5} we obtain parametrizations of all these  surfaces, which appear in Thm. \ref{t2}. These surfaces are generated by the curves that appears in \eqref{sc2} but where to one of them is applied  a rigid motion of $\rr^3$.

Part of the results can ben straightforward extended to timelike surfaces of $\rr^3$ with zero mean curvature.  This is due by two reasons. First, a translation surface of $\rr^3$ given by \eqref{tt} has, in general, regions where the surface is Riemannian and other ones where the surface is timelike. This occurs regardless the causal character of the generating curves. In consequence, in all parametrizations of translation maximal surfaces (as for example in Thm. \ref{t2}), the surfaces may have open regions where the surface is timelike, but the mean curvature follows being zero.

  A second reason is that for timelike surfaces the notion of mean curvature as the trace of the Weingarten map coincides for spacelike   surfaces. The computation of the  equation $H=0$ by using a given parametrization of the surface is the same regardless if the surface is spacelike or timelike, even knowing that the Weingarten map of a timelike surface can be not diagonalizable.

%%%%%
\section{Preliminaries}\label{s2}
%%%%

In this section we recall some basics of the differential geometry of curves and surfaces in $\rr^3$ \cite{lo2} and we obtain the  Scherk surfaces \eqref{sc2}. A vector $v\in\rr^3$  is said to be spacelike (resp. timelike or lightlike) if $\langle v,v\rangle>0$ or $v=0$ (resp. $\langle v,v\rangle<0$, $\langle v,v\rangle=0$). The vector product of two vectors $u,v$ is defined as the unique vector $u\times v$ such that $\langle u\times v,w\rangle=(u,v,w)$,  for all $w\in\rr^3$, where $(u,v,w)$ denotes the determinant   $\mbox{det}(u,v,w)$. 
   A curve $\alpha\colon I\subset\r\to\rr^3$  is said to be spacelike (resp. timelike, lightlike) if $\alpha'(s)$ is spacelike (resp. timelike, lightlike) for all $s\in I$. 
 
 \begin{remark} All our results of classification are local and thus, we will assume that  the causality of $\alpha$, as well as its derivatives, is the same for all $s\in I$. Notice that the spacelike and the timelike conditions are open in the sense that if $\alpha$ is spacelike (or timelike) at $s=s_0$, then in an interval around $s_0$ the curve $\alpha$ is also spacelike (or timelike). In case that $\alpha''$ is lightlike, we will assume that this occurs in the domain $I$ of $\alpha$. Unless otherwise specified, all spacelike curves will be parametrized by arc length. 
\end{remark}
  
We recall the Frenet frame and the Frenet equations of a spacelike curve. Suppose that $\alpha\colon I\subset \r\to\rr^3$, $\alpha=\alpha(s)$, is a spacelike curve of $\rr^3$ parametrized by arc-length. The vector $\t(s)=\alpha'(s)$ is called the tangent vector to $\alpha$.  Suppose that $\t'(s) \not=0$  all $s\in I$. We distinguish two cases according the causality of $\alpha''$.

\begin{enumerate}
\item Case   $\alpha''$ is spacelike or timelike. We say that $\alpha$ is a {\it curve of Frenet type}. The curvature is defined by $\kappa=|\t'|$ and the unit normal is $\n(s)=\t'(s)/\kappa(s)$. The binormal is defined by  $\b=\t\times\n$. Let $\langle\n,\n\rangle=\epsilon\in\{-1,1\}$ and thus $\langle \b,\b\rangle=-\epsilon$.  The Frenet equations are
\begin{equation}\label{f1}
\left\{\begin{array}{ccccc}
\t'&=& &\kappa \n & \\
\n'&=&-\epsilon\kappa \t& +& \tau \b\\
\b'&=& & \tau \n. & \\
\end{array}\right.
\end{equation}
 
\item Case  $\alpha''$ lightlike. These curves are called {\it pseudo-null curves}. The normal vector is defined by  $\n=\t'$. For each $s\in I$, define $\b(s)$ are the unique  lightlike vector such that $\b(s)$ is orthogonal to $\t(s)$, $\langle \n(s),\b(s)\rangle=1$ and $(\t(s),\n(s),\b(s))=1$ for all $s\in I$.   In particular, notice that  $\t\times\n=-\n$. The Frenet equations are
\begin{equation}\label{f3}
\left\{\begin{array}{ccccc}
\t'&=& & \n & \\
\n'&=&& \kappa \n&  \\
\b'&=&-\t &  &-\kappa \b. \\
\end{array}\right.
\end{equation}

\end{enumerate}

Pseudo-null curves are curves contained in lightlike planes \cite{da}. If  $\kappa$ is constant, we have explicit parametrizations  of these curves.  

\begin{proposition}\label{pr1} Let $\alpha\colon I\to\rr^3$ be a spacelike curve parametrized by arc-length such that $\alpha$ is a pseudo-null curve. If $\kappa$ is constant, then $\alpha$ can be parametrized by
\begin{equation}\label{pr11}
\begin{split}
\alpha(s)&=\frac{s^2}{2}\vec{v}+s\vec{b},\quad (\kappa=0),\\
\alpha(s)&=e^{ks}\vec{v}-\frac{s}{k}\vec{b},\quad (\kappa\not=0),
\end{split}
\end{equation}
where $\vec{v}$ is a lightlike vector, $\vec{b}$ is a unit spacelike vector and orthogonal to $\vec{v}$.
\end{proposition}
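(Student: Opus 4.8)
The plan is to integrate the pseudo-null Frenet equations \eqref{f3} directly, exploiting the hypothesis that $\kappa$ is constant so that the system decouples into elementary linear ODEs, and then to identify the vectors of integration with $\vec{v}$ and $\vec{b}$. I would treat the two cases separately. In the case $\kappa=0$, the second equation of \eqref{f3} reads $\n'=0$, so $\n$ is a constant vector; set $\vec{v}:=\n$. Since $\n=\t'=\alpha''$ is lightlike (this is exactly the pseudo-null condition), $\vec{v}$ is lightlike. The first equation $\t'=\n=\vec{v}$ then integrates to $\t(s)=\vec{b}+s\vec{v}$ with $\vec{b}:=\t(0)$, and a second integration gives $\alpha(s)=\tfrac{s^2}{2}\vec{v}+s\vec{b}$ after applying a translation placing $\alpha(0)$ at the origin. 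This is the first displayed parametrization.

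For the case $\kappa\neq 0$, writing $k=\kappa$, the equation $\n'=k\,\n$ with constant coefficient has solution $\n(s)=e^{ks}\n(0)$. Substituting into $\t'=\n$ and integrating gives $\t(s)=\tfrac{1}{k}e^{ks}\n(0)+\vec{c}$ for a constant vector $\vec{c}$, and integrating once more yields $\alpha(s)=\tfrac{1}{k^2}e^{ks}\n(0)+s\,\vec{c}$, again after a translation absorbing the constant of the second integration. Putting $\vec{v}:=\tfrac{1}{k^2}\n(0)$ and $\vec{b}:=-k\,\vec{c}=\n(0)-k\,\t(0)$ then recasts this as $\alpha(s)=e^{ks}\vec{v}-\tfrac{s}{k}\vec{b}$, which is the second displayed parametrization.

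It remains to verify the causal characters and orthogonality, and this is where the computation must be done carefully rather than by a deep idea. All the required information is encoded in three scalar relations evaluated at $s=0$: the arc-length normalization $\langle\t,\t\rangle=1$, the lightlike condition $\langle\n,\n\rangle=0$, and $\langle\t,\n\rangle=0$, the last obtained by differentiating $\langle\t,\t\rangle=1$. From $\langle\n,\n\rangle=0$ one reads off that $\vec{v}$ is lightlike in both cases; from $\langle\t(0),\n(0)\rangle=0$ one gets $\langle\vec{v},\vec{b}\rangle=0$; and expanding $\langle\vec{b},\vec{b}\rangle$ through these relations shows that $\vec{b}$ is spacelike. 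In the case $\kappa=0$ this gives directly $\langle\vec{b},\vec{b}\rangle=\langle\t(0),\t(0)\rangle=1$, so $\vec{b}$ is a unit vector. I expect the only genuinely delicate point to be the bookkeeping of the integration constants, together with the correct use of the translation freedom to normalize $\alpha(0)$; the verification of the causal type of $\vec{v}$ and $\vec{b}$ then follows mechanically from the orthonormality-type relations of the pseudo-null frame, so no essential obstacle arises once $\kappa$ is constant.
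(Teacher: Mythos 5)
Your proof is correct and follows essentially the same route as the paper: in each case you integrate the constant-coefficient pseudo-null Frenet system and then read off the causal characters of the integration vectors, the only cosmetic difference being that the paper deduces these characters by expanding $|\alpha'(s)|^2=1$ and invoking the linear independence of $\{1,e^{ks},e^{2ks}\}$, while you evaluate the frame relations $\langle\t,\t\rangle=1$, $\langle\n,\n\rangle=0$, $\langle\t,\n\rangle=0$ at $s=0$ --- two equivalent ways of doing the same bookkeeping. One remark: for $\kappa=k\neq0$ your identification $\vec{b}=\n(0)-k\t(0)$ gives $\langle\vec{b},\vec{b}\rangle=k^2$, exactly as in the paper's own proof, so the word ``unit'' in the statement of the proposition is accurate only in the case $\kappa=0$; this is an imprecision of the statement itself (the paper later uses $|\vec{b}|=k$), not a gap in your argument.
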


\begin{proof}
If $\kappa=0$ identically, then $\n$ is constant. The derivative of  the function $s\mapsto \langle\alpha(s),\n\rangle$ is $0$, proving that $\alpha$ is contained in a plane. We can parametrize $\alpha$ by integrating $\t''(s)=0$, obtaining $\alpha(s)=\frac{s^2}{2}\vec{v}+s\vec{b}$, where $\vec{v}$ is a lightlike vector, $\vec{b}$ is a unit spacelike vector and orthogonal to $\vec{v}$.

Suppose $\kappa\not=0$, $\kappa=k$.   Then $\t''=k\t'$. This gives $\t'=k\t+\vec{b}$ for some   vector $\vec{b}\in\rr^3$.  Integrating again, and after a translation, we  get 
$$\alpha(s)=e^{ks}\vec{v}-\frac{s}{k}\vec{b},$$
where $\vec{v}\in\rr^3$. 
Identity $|\alpha'(s)|^2=1$ becomes
$$k^2e^{2ks}|\vec{v}|^2-2e^{ks}\langle\vec{v},\vec{b}\rangle+\frac{1}{k^2}|\vec{b}|^2=1$$
for all $s\in I$. Since the functions $\{1,e^{ks}, e^{2ks}\}$ are linearly independent, then $\vec{v}$ is a lightlike vector orthogonal and $\vec{b}$ is spacelike with $|\vec{b}|^2=k^2$ and orthogonal to $\vec{v}$.
\end{proof}

A surface $M\subset\rr^3$ is called spacelike (resp. timelike) if the induced metric on $M$ is Riemannian (resp. Lorentzian) at every tangent plane. As a consequence, all curves contained in a spacelike surface are spacelike. Given a spacelike surface, it is  possible to choose a unit normal vector field $N$  globally defined on  $M$, being $N$ timelike at every point.  The  shape operator $S=-(dN)$  is diagonalizable and its eigenvalues are the principal curvatures $\kappa_1$, $\kappa_2$. The mean curvature is defined by $H=-\frac12(\kappa_1+\kappa_2)$ and the Gauss curvature is $K=-\kappa_1\kappa_2$. A {\it maximal surface} is a spacelike surface with $H=0$ at every point.

If $X=X(u,v)$ is a parametrization of a spacelike surface, the coefficients of the first fundamental form are $E=\langle X_u,X_u\rangle$, $F=\langle X_u,X_v\rangle$ and $G=\langle X_v,X_v\rangle$. If we take the unit normal vector $N=\frac{X_u\times X_v}{|X_u\times X_v|}$, the condition $H=0$ writes as 
\begin{equation}\label{h}
G(X_u,X_v,X_{uu})-2F(X_u,X_v,X_{uv})+E(X_u,X_v,X_{vv})=0.
\end{equation}
%Similarly, the Gauss curvature $K$ is 
%$$K=\frac{(X_u,X_v,X_{uu}) (X_u,X_v,X_{vv})-(X_u,X_v,X_{uv})^2}{(EG-F^2)^2}.$$

\begin{remark}  For timelike surfaces, it is also possible to define the mean curvature $H$ begin now $2H$ the trace of $S$. In this case, the shape operator $S$ may be not diagonalizable but formula \eqref{h} is still valid for timelike surfaces with zero mean curvature at every point.    
\end{remark}

Let $M$ be   a  translation surface parametrized by \eqref{tt}. Since $X_{st}=0$, the computation of  $H=0$ in \eqref{h} is  
\begin{equation}\label{eq1}
 (\t_\alpha,\t_\beta,\t_\alpha')+(\t_\alpha,\t_\beta,\t_\beta')=0.
\end{equation}
Since the generating curves  are parametrized by arc-length, the principal curvatures of $M$ are $\kappa_1=(\t_\alpha,\t_\beta,\t_\alpha')$ and $\kappa_2=(\t_\alpha,\t_\beta,\t_\beta')$.

 In the results of this paper we use that spacelike surfaces of $\rr^3$ where  both $H$ and $K$ are constant (isoparametric surfaces) are  planes, hyperbolic planes and circular cylinders (see a proof in \cite{lo22}). In consequence, a maximal surface where a principal curvature is identically $0$,  is contained  in a plane. 
 
 Let $M$ be a maximal translation surface where a generating curve is straight-line. If, say, $\alpha$ is a straight-line, then $\t_\alpha'=0$, which implies that $\kappa_1=0$ on the surface, hence the surface is included in a plane.   From now, we will discard this situation and all generating curves of maximal translation surfaces will not be straight-lines. In particular, the Frenet equations \eqref{f1} and \eqref{f3} hold for both curves.
 
 To finish this section, we go back to the Scherk surfaces given in \eqref{sc2}. We analyze what type of curves are their generating curves which will be  useful in the next arguments. For this we need to know the parametrization by arc-length of these curves. 
\begin{enumerate}
\item Let $\alpha(x)=(x,\frac{1}{c}\log(\cos (c x)),0)$. Its parametrization by arc-length is 
\begin{equation}\label{cu1}
\alpha(s)=\frac{1}{c}\left(\tan^{-1}\sinh(cs),\log\cosh(cs),0\right).
\end{equation}
The unit normal is spacelike and the curvature is  $\kappa(s)=\frac{c}{\cosh(cs)}$. 

\item Let $\alpha(x)=(x,0,\frac{1}{c}\log(\cosh (c x)))$. Its parametrization by arc-length is
\begin{equation}\label{cu2}
\alpha(s)=\frac{1}{c}\left(\tanh^{-1}\sinh(c s),0,\log\cos(cs)\right).
\end{equation}
The unit normal is timelike and the curvature is   $\kappa(s)=\frac{c}{\cos(cs)}$. 

\item Let $  \alpha(z)=(0,\frac{1}{c}\log(\sinh (cz),z)$. Its parametrization by arc-length is 
\begin{equation}\label{cu3}
\alpha(s)=\frac{1}{c}\left(\log\sinh(c s),0,\log\coth\frac{cs}{2}\right).
\end{equation}
The unit normal is timelike and the curvature is   $\kappa(s)=\frac{c}{\sinh(cs)}$. 
\end{enumerate}
  
%%%%%%%%%%%%%%%%%%%%
\section{Case where a generating curve is a pseudo-null curve}\label{s3}
%%%%%%%%

Consider a maximal translation surface $M$ given by the parametrization \eqref{tt}. In this section we will assume that a generating curve, for example $\beta$ is a pseudo-null curve which.  We separate the cases that $\alpha$ is or is not a Frenet curve.

%%%
\subsection{Case $\alpha$ is a Frenet curve}\label{s31} 
%%%%%

Suppose that $\alpha$ is a Frenet curve. We distinguish the cases that $\alpha''$ is spacelike and that $\alpha''$ is timelike.

\begin{theorem} \label{t31}
 Let $M$ be a spacelike  translation surface \eqref{tt} such that  $\alpha''$  is spacelike and  $\beta$ is a pseudo-null curve. If $M$ is a maximal surface, then $M$ is a plane or both curves are planar curves. In the latter case,   after a rigid motion, the surface can be parametrized by 
\begin{equation}\label{e11}
X(x,t)=(x,f(x),0)+\beta(t),
\end{equation}
where  
\begin{equation}\label{e12}
 \begin{split}
 f'(x)&=-\tan\left(k(\cos\theta f(x)-x\sin\theta)+a\right), \\ 
 \beta(t)&=me^{kt}(-\sin\theta,\cos\theta,1)+t(\cos\theta,\sin\theta,0),
 \end{split}
 \end{equation}
 and $a,k,m,\theta\in\r$, $k,m\not=0$.
   \end{theorem}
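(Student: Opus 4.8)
The plan is to read the maximality condition \eqref{eq1} through the two Frenet systems and then extract the geometry by differentiating triple products. Writing $\t_\alpha'=\kappa_\alpha\n_\alpha$ (from \eqref{f1} with $\epsilon=1$) and $\t_\beta'=\n_\beta$ (from \eqref{f3}), equation \eqref{eq1} becomes
\begin{equation*}
\kappa_\alpha\,(\t_\alpha,\t_\beta,\n_\alpha)+(\t_\alpha,\t_\beta,\n_\beta)=0,
\end{equation*}
so that the principal curvatures are $\kappa_1=\kappa_\alpha(\t_\alpha,\t_\beta,\n_\alpha)$ and $\kappa_2=(\t_\alpha,\t_\beta,\n_\beta)$. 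If $A:=(\t_\alpha,\t_\beta,\n_\alpha)\equiv 0$ then $\kappa_1\equiv0$ and, as recalled in Section \ref{s2}, $M$ lies in a plane; hence I may assume $A\not\equiv0$ and must only show that $\alpha$ is planar, since $\beta$, being pseudo-null, already lies in a lightlike plane.

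First I would differentiate the maximality relation with respect to $t$, using $\n_\beta'=\kappa_\beta\n_\beta$. Since $\partial_t(\t_\alpha,\t_\beta,\n_\beta)=\kappa_\beta(\t_\alpha,\t_\beta,\n_\beta)$ and $\kappa_2=-\kappa_\alpha A$, dividing by $\kappa_\alpha\neq0$ yields the identity $(\t_\alpha,\n_\beta,\n_\alpha)=\kappa_\beta A$. Differentiating this once more in $t$ collapses (because $\partial_t\n_\beta$ is parallel to $\n_\beta$) to $\kappa_\beta'\,A=0$, so $\kappa_\beta$ is a constant $k$. Next I would differentiate the same identity with respect to $s$, now using $\n_\alpha'=-\kappa_\alpha\t_\alpha+\tau_\alpha\b_\alpha$; each derivative produces a factor $\tau_\alpha$ times a triple product, and after cancelling $\tau_\alpha$ (under the working hypothesis $\tau_\alpha\not\equiv0$) one obtains the companion identity $(\t_\alpha,\n_\beta,\b_\alpha)=k\,(\t_\alpha,\t_\beta,\b_\alpha)$. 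Together with $(\t_\alpha,\n_\beta,\n_\alpha)=k\,(\t_\alpha,\t_\beta,\n_\alpha)$ and the trivial vanishing when the third slot is $\t_\alpha$, this says that $(\t_\alpha,\n_\beta-k\,\t_\beta,w)=0$ for $w$ running over the basis $\{\t_\alpha,\n_\alpha,\b_\alpha\}$, hence $\t_\alpha\times(\n_\beta-k\,\t_\beta)=0$. But $\n_\beta-k\,\t_\beta$ depends only on $t$, so parallelism to $\t_\alpha(s)$ either forces $\t_\alpha$ to have constant direction (impossible, since $\alpha$ is not a line) or forces $\n_\beta=k\,\t_\beta$ (impossible, since $\n_\beta$ is lightlike and $\t_\beta$ spacelike). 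This contradiction gives $\tau_\alpha\equiv0$, so $\alpha$ is planar and both generating curves are planar.

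For the explicit description I would normalize by a rigid motion. As $\alpha$ is planar with $\t_\alpha$ and $\n_\alpha$ both spacelike, its plane is spacelike and may be taken to be $z=0$, so $\alpha(x)=(x,f(x),0)$; and $\beta$, being a pseudo-null curve of constant curvature $k$, is given by Proposition \ref{pr1} as $\beta(t)=e^{kt}\vec{v}-\tfrac{t}{k}\vec{b}$ with $\vec{v}$ lightlike and $\vec{b}$ spacelike orthogonal to $\vec{v}$. Substituting $X(x,t)=\alpha(x)+\beta(t)$ into \eqref{h} (the mixed term drops since $X_{xt}=0$, and $G=|\beta'|^2=1$) produces an expression that is a combination of $e^{kt}$ and a term constant in $t$, with coefficients depending only on $x$; separating these two $t$-modes forces the third coordinate of $\vec{b}$ to vanish, which fixes $\vec{b}=-k(\cos\theta,\sin\theta,0)$ and $\vec{v}=m(-\sin\theta,\cos\theta,1)$ up to the free angle $\theta$, i.e. the stated form of $\beta$ in \eqref{e12}. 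The surviving mode then reads $f''=k(1+f'^2)(\sin\theta-f'\cos\theta)$, whose first integral is exactly $f'=-\tan\!\big(k(\cos\theta\,f-x\sin\theta)+a\big)$, with $a$ the integration constant.

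I expect the main obstacle to be this final normalization rather than the structural step: the triple-product differentiation that kills the torsion is clean precisely because it never diagonalizes the shape operator, sidestepping the non-diagonalizability warned about in the introduction, whereas pinning down $\beta$ requires correctly exploiting the degeneracy of the pseudo-null frame and the arc-length constraint to separate the $t$-modes and to see that it is maximality, not Proposition \ref{pr1} alone, that forces $\vec{b}$ into the plane $z=0$. Care is also needed to treat $\tau_\alpha\equiv0$ versus $\tau_\alpha$ vanishing only on a subset, which the local hypotheses of the Remark in Section \ref{s2} let me handle on a single interval.
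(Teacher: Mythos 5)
Your argument is sound in its overall architecture, but it has one concrete omission: you never exclude the constant value $k=\kappa_\beta=0$. Your structural step only shows that $\kappa_\beta$ is constant (from $\kappa_\beta'A=0$ on the set where $A=(\t_\alpha,\t_\beta,\n_\alpha)\neq 0$) and that $\tau_\alpha\equiv 0$; it places no constraint on the value of that constant. You then pass directly to the exponential parametrization $\beta(t)=e^{kt}\vec{v}-\tfrac{t}{k}\vec{b}$ of Proposition \ref{pr1}, which presupposes $k\neq 0$ --- and this is precisely the hypothesis $k\neq 0$ appearing in \eqref{e12} that must be \emph{proved}. The paper treats $k=0$ as a separate case: there $\beta(t)=\tfrac{t^2}{2}\vec{v}+t\vec{b}$ with $\vec{v}$ lightlike, the maximality equation becomes a polynomial in $t$ whose leading coefficient forces $\langle\b_\alpha,\vec{v}\rangle=0$; since $\b_\alpha$ is timelike (here $\epsilon_\alpha=1$), this places the lightlike vector $\vec{v}$ in the spacelike plane spanned by $\t_\alpha,\n_\alpha$, hence $\vec{v}=0$, a contradiction. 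Your own mode-separation technique disposes of this in two lines (the coefficient of $t$ gives $v_3f''=0$, so $v_3=0$ and a lightlike $\vec{v}$ with vanishing third coordinate is zero), but the case must be stated; as written, the claim that $k\neq 0$ is unestablished.

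Apart from that, your route coincides with the paper's in the first half (the reduction of \eqref{eq1} to $\kappa_\alpha\langle\b_\alpha,\t_\beta\rangle=\langle\t_\alpha,\n_\beta\rangle$ and the two $t$-differentiations yielding $\kappa_\beta'\langle\b_\alpha,\t_\beta\rangle=0$ are identical in content) and diverges usefully in the middle. Where the paper substitutes the explicit form of $\beta$, separates the $e^{kt}$ modes to obtain $\langle\b_\alpha,\vec{b}\rangle=0$, and then runs the dichotomy $\tau_\alpha\langle\n_\alpha,\vec{b}\rangle=0$, you instead differentiate the frame identity in $s$, cancel $\tau_\alpha$, and conclude $\t_\alpha\times(\n_\beta-k\t_\beta)=0$, which is impossible because $\n_\beta-k\t_\beta$ depends only on $t$ and cannot vanish (lightlike versus spacelike). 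This is a genuinely different and cleaner way to kill the torsion: it never invokes Proposition \ref{pr1} at that stage and avoids the two sub-cases. Your final normalization (plane $z=0$, the constant-in-$t$ mode forcing $b_3=0$, and the first integral of $f''=k(1+f'^2)(\sin\theta-f'\cos\theta)$) is correct and integrates exactly to \eqref{e12}.
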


\begin{proof}
Recall that $\beta$ is included in a plane because $\beta$ is a pseudo-null curve. Frenet equations \eqref{f1} and the identity  $\t_\beta\times\n_\beta=-\n_\beta$ imply that Eq. \eqref{eq1} becomes
\begin{equation}\label{c1}\kappa_\alpha\langle \b_\alpha,\t_\beta\rangle-\langle\t_\alpha,\n_\beta\rangle=0.
\end{equation}
The principal curvatures of $M$ are $\kappa_1=\kappa_\alpha\langle \b_\alpha,\t_\beta\rangle$ and $\kappa_2=-\langle\t_\alpha,\n_\beta\rangle$. Differentiating with respect to  $t$, we have
$$\kappa_\alpha\langle\b_\alpha,\n_\beta\rangle-\kappa_\beta\langle\t_\alpha,\n_\beta\rangle=0.$$
From   \eqref{c1}, we have 
\begin{equation}\label{c11}
\kappa_\beta\langle\b_\alpha,\t_\beta\rangle-\langle\b_\alpha,\n_\beta\rangle=0.
\end{equation}
Differentiating with respect to $t$ again, the equations of Frenet \eqref{f3} give
\begin{equation}\label{k21}\kappa_\beta'\langle\b_\alpha,\t_\beta\rangle=0.
\end{equation}
If $\langle\b_\alpha,\t_\beta\rangle=0$ identically, then the principal curvature $\kappa_1$ of $M$ is identically $0$ and   thus $M$ is a plane. 

Suppose that  $\langle\b_\alpha,\t_\beta\rangle\not=0$ at some point $(s_0,t_0)\in I\times J$. Then \eqref{k21} implies that  in an interval around $s_0$, the curvature $\kappa_\beta$ is constant.   We distinguish if $k$ is $0$ or not.
 
\begin{enumerate}
\item Case $k=0$. By  \eqref{pr11} we know that 
$$\beta(t)=\frac{t^2}{2}\vec{v}+\vec{b}t,$$
where $\vec{v},\vec{b}\in\rr^3$,  $\vec{v}$ is a spacelike vector and $\vec{b}$ is a unit spacelike vector orthogonal to $\vec{v}$.   Putting in \eqref{c1}, we have 
\begin{equation}\label{c8}
\kappa_\alpha\langle\b_\alpha,\vec{v}\rangle t+\kappa_\alpha\langle\b_\alpha,\vec{b}\rangle-\langle\t_\alpha,\vec{v}\rangle=0.
\end{equation}
This is a polynomial on $t$, hence $\langle\b_\alpha,\vec{v}\rangle=0$. Thus $\vec{v}$ is a linear combination of $\t_\alpha$ and $\n_\alpha$. Since $\vec{v}$ is lightlike and $\{\t_\alpha,\n_\alpha\}$ are orthonormal spacelike vectors, this combination is trivial, so $\vec{v}=0$. This gives a contradiction.
\item Case $k\not=0$. From the proof of   \eqref{pr11}, we know that 
\begin{equation}\label{c12}
\beta(t)=e^{kt}\vec{v}-\frac{t}{k}\vec{b},
\end{equation}
with $\vec{v}$  is lightlike orthogonal to $\vec{b}$ and $\vec{b}$ is a spacelike vector with $|\vec{b}|=k$. 
Substituting in \eqref{c1},  we have 
$$k e^{kt}\left(\kappa_\alpha \b_\alpha -k \t_\alpha,\vec{v}\rangle\right)-\frac{\kappa_\alpha}{k}\langle\b_\alpha,\vec{b}\rangle=0.$$
Therefore
\begin{equation}\label{c9}
\begin{split}
0&= \langle \kappa_\alpha \b_\alpha-k\t_\alpha,\vec{v}\rangle,\\
0&=\frac{\kappa_\alpha}{k}\langle\b_\alpha,\vec{b}\rangle.
\end{split}
\end{equation}
From the second equation, we have $\langle\b_\alpha,\vec{b}\rangle=0$. Differentiating, we get  $\tau_\alpha\langle\n_\alpha,\vec{b}\rangle=0$. We distinguish two cases.
\begin{enumerate}
\item Case $\langle\n_\alpha,\vec{b}\rangle=0$ identically. Differentiating again, we have $\kappa_\alpha\langle\t_\alpha,\vec{b}\rangle=0$. Since $\kappa_\alpha\not=0$, then $\langle \t_\alpha,\vec{b}\rangle=0$. This gives $\vec{b}=0$, a contradiction. 
\item Case $\tau_\alpha=0$.   In particular, $\alpha$ is a planar curve. This proves Thm. \ref{t1} in this situation. After a rigid motion, we can assume that $\alpha$ is contained in the $xy$-plane and  $\b_\alpha=-(0,0,1)$. Then there are $m,\theta\in\r$ such that
\begin{equation*}
\begin{split}
\vec{v}&=m(-\sin\theta,\cos\theta,1),\\
 \vec{b}&=k(\cos\theta,\sin\theta,0).
 \end{split}
 \end{equation*}
Both vectors in \eqref{c12} provide the parametrization of $\beta$  in \eqref{c9}. We now parametrize $\alpha$ by $\alpha(x)=(x,f(x),0)$ for some smooth function $f=f(x)$. Then 
$$\kappa_\alpha=\frac{f''}{(1+f'^2)^{3/2}},\quad \t_\alpha=\frac{(1,f',0)}{\sqrt{1+f'^2}}.$$ 
Then
$$\kappa_\alpha\b_\alpha-k\t_\alpha=\left(-\frac{k}{\sqrt{1+f'^2}},\frac{kf'}{\sqrt{1+f'^2}},-\frac{f''}{(1+f'^2)^{3/2}}\right).$$
The first equation of \eqref{c9} gives 
$$\frac{f''}{1+f'^2}=k\sin\theta+kf'\cos\theta.$$
A first integration of this equation yields \eqref{e12}.

\end{enumerate}

\end{enumerate}
\end{proof}

\begin{theorem}  \label{t32}
Let $M$ be a spacelike  translation surface \eqref{tt} such that  $\alpha''$  is timelike and $\beta''$ is a pseudo-null curve. If $M$ is a maximal surface, then $M$ is a plane or both curves are planar curves. In the latter case,   after a rigid motion the surface can be parametrized by
$$X(x,t)=(x,0,f(x))+\beta(t),$$
where we have two possibilities:
\begin{enumerate}
\item The function $f$ and the curve $\beta$ are given by  
\begin{equation}\label{e10}
\begin{split}
f'(x)&=-\tanh\left(m(x-f(x))+a\right),\\
\beta(t)&=m\frac{t^2}{2}(1,0,1)+t(b_1,1,b_1),
\end{split}
\end{equation}
where  $m,b_1,a\in\r$, $m\not=0$.  
\item The function $f$ and the curve $\beta$ are given by  
\begin{equation}\label{e13}
\begin{split}
f'(x)&=-\tanh\left(k(\cosh\theta f(x)-x\sinh\theta)+a\right), \\
\beta(t)&=e^{kt}m(\sinh\theta,1,\cosh\theta)-t(\cosh\theta,0,\sinh\theta),
\end{split}
\end{equation}
where  $a,k,\theta,m\in\r$, $k,m\not=0$.
\end{enumerate}
\end{theorem}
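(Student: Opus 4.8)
The plan is to run exactly the scheme of the proof of Theorem~\ref{t31}, but now with the Frenet equations \eqref{f1} taken for $\epsilon=-1$, so that $\n_\alpha$ is timelike and $\b_\alpha$ is spacelike. First I would substitute \eqref{f1} and the pseudo-null equations \eqref{f3}, together with the relation $\t_\beta\times\n_\beta=-\n_\beta$, into the maximality condition \eqref{eq1}, reducing it to an equation of the same form as \eqref{c1}, namely $\kappa_\alpha\langle\b_\alpha,\t_\beta\rangle-\langle\t_\alpha,\n_\beta\rangle=0$ (with the sign changes forced by $\epsilon=-1$). Differentiating twice in $t$ and using \eqref{f3} produces, as in \eqref{k21}, the identity $\kappa_\beta'\langle\b_\alpha,\t_\beta\rangle=0$. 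Hence either $\langle\b_\alpha,\t_\beta\rangle\equiv 0$, forcing $\kappa_1\equiv 0$ and $M$ to be a plane, or $\kappa_\beta$ is constant on an interval, and then Proposition~\ref{pr1} gives the two explicit forms \eqref{pr11} for $\beta$.

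In the branch $k=0$ I would insert $\beta(t)=\tfrac{t^2}{2}\vec v+t\vec b$ into the reduced equation; reading it as a polynomial in $t$ forces $\langle\b_\alpha,\vec v\rangle=0$. Here lies the \emph{essential} difference with Theorem~\ref{t31}: since $\n_\alpha$ is now timelike, $\b_\alpha$ is spacelike, its orthogonal complement is a Lorentzian plane that does contain the lightlike vector $\vec v$, and so no contradiction arises and this branch survives. Differentiating $\langle\b_\alpha,\vec v\rangle=0$ in $s$ gives $\tau_\alpha\langle\n_\alpha,\vec v\rangle=0$; the alternative $\langle\n_\alpha,\vec v\rangle\equiv 0$ would force $\vec v\parallel\t_\alpha$, impossible for a constant lightlike vector, so $\tau_\alpha=0$ and $\alpha$ is planar, which proves Theorem~\ref{t1} in this configuration. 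After a rigid motion I place $\alpha$ in the timelike $xz$-plane as $\alpha(x)=(x,0,f(x))$ with $\b_\alpha=(0,-1,0)$, and write $\vec v=m(1,0,1)$, $\vec b=(b_1,1,b_1)$ so that the constraints of \eqref{pr11} hold; the remaining coefficient equation $\kappa_\alpha\langle\b_\alpha,\vec b\rangle=\langle\t_\alpha,\vec v\rangle$ then becomes $f''/(1-f'^2)=-m(1-f')$, whose first integral is precisely \eqref{e10}.

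In the branch $k\neq 0$ I would use $\beta(t)=e^{kt}\vec v-\tfrac{t}{k}\vec b$. Substituting and using the linear independence of $\{1,e^{kt}\}$ splits the reduced equation into two conditions as in \eqref{c9}; the exponential-free one gives $\langle\b_\alpha,\vec b\rangle=0$, and differentiating in $s$ yields $\tau_\alpha\langle\n_\alpha,\vec b\rangle=0$. The subcase $\langle\n_\alpha,\vec b\rangle\equiv 0$ forces $\vec b=0$, a contradiction, so again $\tau_\alpha=0$ and $\alpha$ is planar. Placing $\alpha$ in the $xz$-plane, I would parametrize the orthogonal lightlike--spacelike pair by a hyperbolic angle, $\vec v=m(\sinh\theta,1,\cosh\theta)$ and $\vec b=k(\cosh\theta,0,\sinh\theta)$ (automatically $\vec v$ lightlike, $\vec b$ spacelike of norm $k$, and $\vec v\perp\vec b$). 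Feeding these into the surviving equation $\langle\kappa_\alpha\b_\alpha-k\t_\alpha,\vec v\rangle=0$ produces $f''/(1-f'^2)=k(\sinh\theta-f'\cosh\theta)$, which integrates to $f'=-\tanh\!\big(k(\cosh\theta\,f-x\sinh\theta)+a\big)$, i.e. \eqref{e13}.

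The main obstacle is the $k=0$ branch. In the spacelike-normal setting of Theorem~\ref{t31} it was discarded by a causal-character contradiction, and the whole point here is to recognize that passing to a timelike normal ($\epsilon=-1$, $\b_\alpha$ spacelike) removes that obstruction and yields the genuinely new family \eqref{e10}. Beyond that conceptual point, the delicate work is the sign and causal-character bookkeeping: fixing the correct orientation of $\b_\alpha$, carrying the minus sign of the timelike direction of the metric through the computations of $\kappa_\alpha$ and the various inner products, and checking that the two first-order ODEs integrate to exactly the stated closed forms.
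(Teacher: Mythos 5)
Your proposal follows the paper's proof essentially verbatim: the same reduction of \eqref{eq1} to \eqref{c1} and then to \eqref{k21}, the same dichotomy on $\kappa_\beta$ constant with subcases $k=0$ and $k\neq 0$, the same identification of $\vec{v}$ and $\vec{b}$ after placing the planar curve $\alpha$ in the $xz$-plane, and the same two first-order ODEs integrating to \eqref{e10} and \eqref{e13}. Your observation that the $k=0$ branch now survives precisely because $\b_\alpha$ is spacelike, so that $\b_\alpha^{\perp}$ is a Lorentzian plane containing the lightlike vector $\vec{v}$, is exactly the difference from Theorem \ref{t31} that the paper's own (abbreviated) proof highlights.
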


\begin{proof}
The argument is similar to the previous result and we only show the differences.
\begin{enumerate}
\item Case $k=0$. Then $\n_\beta=\vec{v}$ is constant and 
$$\beta(t)=\frac{t^2}{2}\vec{v}+\vec{b}t,$$
where $\langle \vec{v},\vec{b}\rangle=0$ and $\vec{b}$ is a unit spacelike vector. From \eqref{c8}, we have  $\langle\b_\alpha,\vec{v}\rangle=0$.  Differentiating with respect to $s$, we have $\tau_\alpha\langle\n_\alpha,\n\rangle=0$. If $\langle\n_\alpha,\vec{v}\rangle=0$, then $\vec{v}$ is parallel to $\t_\alpha$ which it is not possible. Thus $\tau_\alpha=0$ and, consequently, $\alpha$ is a planar curve. This proves Thm. \ref{t1} in this situation. Since the $\b_\alpha$ is spacelike,  after a rigid motion, we can assume that $\alpha$ is contained in the $xz$-plane and $\b_\alpha=(0,1,0)$. Let $\alpha(x)=(x,0,f(x))$, where $f\colon I\to\r$ is a smooth function. Hence we deduce 
\begin{equation*}
\begin{split}
\vec{v}&=m(1,0,1),\\
 \vec{b}&=(b_1,1,b_1),
 \end{split}
 \end{equation*}
where $m,b_1\in\r$, $m\not=0$. This gives the parametrization of $\beta$ in \eqref{e10}. 
The parametrization of the surface is
$$X(x,t)=(x,0,f(x))+m\frac{t^2}{2}(1,0,1)+t(b_1,1,b_1).$$
Notice that $\kappa_\alpha=\frac{f''}{(1-f'^2)^{3/2}}$ and the first equation of \eqref{c9} gives \eqref{e10}.  
\item Case $k\not=0$. From \eqref{pr11}, we have  
$$\beta(t)=e^{kt}\vec{v}-\frac{t}{k}\vec{b},$$
and \eqref{c9} is still valid. The case  $\langle\n_\alpha,\vec{b} \rangle=0$ is not possible and thus $\tau_\alpha=0$.   In particular, $\alpha$ is a planar curve. After a rigid motion, we write   $\alpha(x)=(x,0,f(x))$ where $f\colon I\to\r$ is a smooth function  and 
\begin{equation*}
\begin{split}
\vec{b}&=k(\cosh\theta,0,\sinh\theta),\\
\vec{v}&=m(\sinh\theta,1,\cosh\theta),
\end{split}
\end{equation*}
where $m,\theta\in\r$, $m\not=0$. The first equation of \eqref{c9} gives \eqref{e13} after a first integration.

\end{enumerate}

 \end{proof}

%%%%%%%%%%%%%%%%%%%%%%%%%%%%%%%%%
\subsection{Case both generating curves are pseudo-null curves}\label{s32}
%%%%%%%%%%%%%%%%%%%%%%%%%%

Suppose that both generating curves are  pseudo-null curves. 

\begin{theorem}\label{t34}
 Let $M$ be a maximal translation surface parametrized by \eqref{tt} where  both two generating curves are pseudo-null curves. Then $M$ is a plane or $\alpha=\beta$ or, after a rigid motion, 
  \begin{equation*}
 \begin{split}
 \alpha(s)&=e^{ks}(0,1,1)-s(1,b,b),\\
 \beta(t)&=e^{kt}(w_1,w_2,w_3)-t( 1,b,b),
 \end{split}
 \end{equation*}
 where $w_2-w_3\not=0$ and 
 $$w_1= b(w_3-w_2) ,\quad b^2(w_2-w_3)+ w_2+w_3=0.$$
\end{theorem}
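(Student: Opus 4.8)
The plan is to collapse the zero–mean–curvature condition \eqref{eq1} into a single scalar identity, differentiate it to force both curvatures to be a common constant, and then substitute the explicit constant–curvature parametrizations of Proposition \ref{pr1} and normalize by a rigid motion. Since both curves are pseudo-null, the Frenet equations \eqref{f3} give $\t_\alpha'=\n_\alpha$ and $\t_\beta'=\n_\beta$, so \eqref{eq1} reads $(\t_\alpha,\t_\beta,\n_\alpha)+(\t_\alpha,\t_\beta,\n_\beta)=0$. Using the cyclic invariance of the determinant, the defining property $\langle u\times v,w\rangle=(u,v,w)$, and the identity $\t\times\n=-\n$ valid for pseudo-null curves, I would rewrite the two principal curvatures as $\kappa_1=(\t_\alpha,\t_\beta,\n_\alpha)=\langle\n_\alpha,\t_\beta\rangle$ and $\kappa_2=(\t_\alpha,\t_\beta,\n_\beta)=-\langle\t_\alpha,\n_\beta\rangle$. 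Hence $H=0$ becomes the clean identity
\[
\langle\n_\alpha,\t_\beta\rangle=\langle\t_\alpha,\n_\beta\rangle=:P .
\]

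Next I would differentiate this identity separately in $s$ and in $t$, using $\n_\alpha'=\kappa_\alpha\n_\alpha$, $\t_\alpha'=\n_\alpha$ and the analogous relations for $\beta$. The $s$-derivative gives $\kappa_\alpha P=\langle\n_\alpha,\n_\beta\rangle$ and the $t$-derivative gives $\langle\n_\alpha,\n_\beta\rangle=\kappa_\beta P$; subtracting yields $(\kappa_\alpha-\kappa_\beta)P=0$. If $P\equiv 0$ then $\kappa_1=P\equiv 0$, a principal curvature vanishes identically, and by the isoparametric remark in Section \ref{s2} the surface $M$ is a plane. Otherwise $P\neq 0$ on an open set, where $\kappa_\alpha(s)=\kappa_\beta(t)$; since the two sides depend on independent variables, each equals a common constant $k$, so $\kappa_\alpha\equiv\kappa_\beta\equiv k$.

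I would then invoke Proposition \ref{pr1}. For $k=0$ one has $\alpha(s)=\tfrac{s^2}{2}\vec v_1+s\vec b_1$ and likewise for $\beta$; substituting into $P$ and matching powers of $s,t$ forces $\langle\vec v_1,\vec v_2\rangle=0$, so (two orthogonal null vectors are parallel) $\vec v_2\parallel\vec v_1$, and a direct computation shows $\t_\alpha\times\t_\beta$ is then a null vector, so $M$ cannot be spacelike and this case is discarded. For $k\neq 0$ I write $\alpha(s)=e^{ks}\vec p_1-s\vec q_1$ and $\beta(t)=e^{kt}\vec p_2-t\vec q_2$ with $\vec p_i$ null, $\vec q_i$ unit spacelike and $\langle\vec p_i,\vec q_i\rangle=0$; substituting the resulting $\t_\alpha,\n_\alpha,\t_\beta,\n_\beta$ into the identity for $P$ and separating the independent exponentials $e^{ks},e^{kt}$ forces $\langle\vec p_1,\vec q_2\rangle=0$ and $\langle\vec q_1,\vec p_2\rangle=0$.

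Finally I would normalize by a rigid motion, setting $\vec p_1=(0,1,1)$ and $\vec q_1=(1,b,b)$, the general unit spacelike vector orthogonal to the null vector $(0,1,1)$. Then $\langle\vec p_1,\vec q_2\rangle=0$ with $|\vec q_2|=1$ gives $\vec q_2=(\pm1,c,c)$, and combining the structural orthogonality $\langle\vec p_2,\vec q_2\rangle=0$ with $\langle\vec q_1,\vec p_2\rangle=0$ forces $c=b$ when $w_2\neq w_3$, i.e.\ $\vec q_2=\vec q_1$ (the sign $-$ is absorbed by $t\mapsto -t$). Writing $\vec p_2=(w_1,w_2,w_3)$, the condition $\langle\vec q_1,\vec p_2\rangle=0$ gives $w_1=b(w_3-w_2)$, and the null condition $w_1^2+w_2^2-w_3^2=0$ factors as $(w_2-w_3)\bigl[b^2(w_2-w_3)+w_2+w_3\bigr]=0$, yielding the stated relation once $w_2\neq w_3$. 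The remaining branch $w_2=w_3$ forces $\vec p_2\parallel\vec p_1$, and the same cross-product computation makes $\t_\alpha\times\t_\beta$ null; this degenerate configuration, in which the two curves share the same null direction, is precisely the excluded alternative $\alpha=\beta$. The delicate point throughout is not the algebra but the causal bookkeeping: one must carry out the rigid-motion normalization correctly (only a one-parameter isotropy of the null direction survives, which is why $b$ cannot be removed) and, above all, check the causal character of $\t_\alpha\times\t_\beta$ in order to discard the $k=0$ and $w_2=w_3$ branches, where the cross product is null and hence $M$ is not a genuine spacelike maximal surface.
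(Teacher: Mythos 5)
Your proposal is correct and follows essentially the same route as the paper: reduce \eqref{eq1} via $\t\times\n=-\n$ to the scalar identity $\langle\n_\alpha,\t_\beta\rangle=\langle\t_\alpha,\n_\beta\rangle$, differentiate in $s$ and $t$ to force either a vanishing principal curvature (hence a plane) or $\kappa_\alpha\equiv\kappa_\beta\equiv k$ constant, then substitute the parametrizations of Proposition \ref{pr1}, separate the independent functions of $s$ and $t$, and normalize by a rigid motion. The only deviations are minor: the paper pins down the residual sign in the spacelike vector of $\beta$ using the orientation identity $\vec{b}\times\vec{v}=-\vec{v}$ rather than a $t\mapsto -t$ reparametrization (which would also flip the exponent $k$), and the degenerate branch $w_2=w_3$ is strictly larger than the alternative $\alpha=\beta$ --- it is rightly discarded, as you do, because $\t_\alpha\times\t_\beta$ is lightlike there, not because the curves coincide.
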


\begin{proof}
Equation \eqref{eq1} is 
$$(\t_\alpha,\t_\beta,\n_\alpha)+(\t_\alpha,\t_\beta,\n_\beta)=0.$$
Since $\t_\alpha\times\n_\alpha=-\n_\alpha$ and $\t_\beta\times\n_\beta=-\n_\beta$, this equation writes as
\begin{equation}\label{c3}
 \langle \n_\alpha,\t_\beta\rangle-\langle\t_\alpha,\n_\beta\rangle=0.
\end{equation}
Now the principal curvatures of $M$ are $\kappa_1=\langle \n_\alpha,\t_\beta\rangle$ and $\kappa_2=-\langle\t_\alpha,\n_\beta\rangle$. 
Notice that if $\alpha=\beta$, then the surface $X(s,t)=\alpha(s)+\alpha(t)$ is maximal for any pseudo-null curve $\alpha$.  From now, we assume $\alpha\not=\beta$.

Differentiating  \eqref{c3}with respect to $s$ and next with respect to $t$,  we obtain 
\begin{equation}\label{c6}
\begin{split}
0&=\kappa_\alpha\langle\n_\alpha,\t_\beta\rangle-\langle\n_\alpha,\n_\beta\rangle,\\
0&=\kappa_\alpha\langle\n_\alpha,\n_\beta\rangle-\kappa_\beta\langle\n_\alpha,\n_\beta\rangle.
\end{split}
\end{equation}
The second equation of \eqref{c6}  writes as
$$(\kappa_\alpha-\kappa_\beta)\langle\n_\alpha,\n_\beta\rangle=0.$$
\begin{enumerate}
\item Case $\kappa_\alpha-\kappa_\beta\not=0$ at some point $s=s_0$ and $t=t_0$. In an open set of $I\times J$ around $(s_0,t_0)$,  we have   $\langle\n_\alpha,\n_\beta\rangle=0$. The first equation of \eqref{c6} implies $\langle\n_\alpha,\t_\beta\rangle=0$ and consequently, from \eqref{c3}, we also have $\langle \t_\alpha,\n_\beta\rangle=0$. This proves that the principal curvatures of $M$ are identically $0$ and thus $M$ is contained in a plane. 
\item Case $\kappa_\alpha-\kappa_\beta=0$ identically. Since $\kappa_\alpha$ depends on $s$ and $\kappa_\beta$ on $t$, then $\kappa_\alpha=\kappa_\beta$ and $\kappa_\alpha$ and $\kappa_\beta$ are constant functions. Let   $\kappa_\alpha=\kappa_\beta=k$.
\begin{enumerate}
\item Case $k=0$. Then 
$$\alpha(s)=\frac{s^2}{2}\vec{v}+\vec{b}_1,\quad \beta(t)=\frac{t^2}{2}\vec{w}+t\vec{b}_2,$$
where $\vec{v},\vec{w}$ are lightlike vectors, $\vec{b}_1$ and $\vec{b}_2$ are unit spacelike vectors and orthogonal to $\vec{v}$ and $\vec{w}$, respectively. Then \eqref{c9} becomes
$$\langle\vec{v},\vec{w}\rangle s-\langle\vec{v},\vec{w}\rangle t+\langle \vec{b}_1,\vec{w}\rangle-\langle\vec{b}_2,\vec{v}\rangle=0.$$
This proves that $\langle\vec{v},\vec{w}\rangle=0$ and because  both vectors are lightlike, they must be proportional, $\vec{w}=\lambda\vec{v}$. Then it is immediate $\langle X(s,t),\vec{v}\rangle=0$ for all $s,t$. This proves that $M$ is contained in a plane.
\item Case $k\not=0$. Then
$$\alpha(s)=e^{ks}\vec{v}+s\vec{b_1},\quad \beta(t)=e^{kt}\vec{w}+t\vec{b}_2,$$
being $\vec{v}, \vec{w}$ and $\vec{b}_1$ and $\vec{b_2}$ as in the case $k=0$. Equation \eqref{c9} is now
$$e^{ks}\langle\vec{b}_2,\vec{v}\rangle-e^{kt}\langle \vec{b}_1,\vec{w}\rangle=0.$$
This gives $\langle\vec{b}_2,\vec{v}\rangle=\langle \vec{b}_1,\vec{w}\rangle=0$. Moreover,   the condition 
  $\t\times\n =-\n $  implies $\vec{b}_1\times\vec{v}=-\vec{v}$ and $\vec{b}_2\times\vec{w}=-\vec{w}$.

After a rigid motion, we   fix the curve $\alpha$. Taking into in account that $\t_\alpha\times\n_\alpha=-\n_\alpha$, we assume without loss of generality that
$$\vec{v}=(0,1,1),\quad \vec{b}_1=(1,b,b).$$
We have two possibilities:
\begin{enumerate}
\item Case $b=0$. Then $\vec{b}_2=(-1,0,0)$ and $\vec{w}=(0,w_2,-w_2)$.
\item Case $b\not=0$. Then  $w_2-w_3\not=0$, $\vec{b}_2=(-1,-b,-b)$ and $\vec{w}=(b(w_3-w_2),w_2,w_3)$, where  $b^2(w_3-w_2)-w_2-w_3=0$.
\end{enumerate}
\end{enumerate}
\end{enumerate}
\end{proof}

\begin{example} The following two surfaces are obtained from Thm. \ref{t34}. 
\begin{enumerate}
\item Let $b=0$ and $w_2=-w_3=1$. Then 
$$X(s,t)=(-s+t,e^{ks}+e^{kt},e^{ks}-e^{kt}),\quad s,t\in\r.$$
\item Let $b=1$ and $w_2=0$ and $w_3=1$. Then 
$$X(s,t)=( e^{kt}-s-t, e^{ks}-s-t, e^{ks}+e^{kt}-s-t),\quad s,t\in\r.$$
\end{enumerate}

\end{example}

 %%%%
 \section{The generating curves are of Frenet type: general case}\label{s4}
 %%%%%%%%%%%%%%%%%%%%%%%%%%%%%
 
 In this section we consider the case the both generating curves are of Frenet type.    The equation $H=0$ given \eqref{eq1} is now
 \begin{equation}\label{eq40}
 \kappa_\alpha\langle\b_\alpha,\t_\beta\rangle=\kappa_\beta\langle\t_\alpha,\b_\beta\rangle.
 \end{equation}
 Differentiating with respect to $s$ and using \eqref{eq40}, we have
\begin{equation}\label{eq4}
\langle \tau_\alpha\n_\alpha +\frac{\kappa_\alpha'}{\kappa_\alpha}  \b_\alpha,\t_\beta\rangle=\kappa_\beta\langle\n_\alpha,\b_\beta\rangle.
\end{equation}
 
From now on, we may suppose that the generating curves $\alpha$ and $\beta$ are non planar. We need to introduce the following notation. For a non-planar curve parameterized by arc length  with curvature $\kappa$ and torsion $\tau$, let
\begin{equation}\label{eqR}
R=\frac{\kappa'}{\kappa}+\frac{\tau'}{\tau},\quad \Sigma=\left(\frac{\kappa'}{\kappa}\right)'+\kappa^2+\tau^2.
\end{equation}
The subscript $\alpha$ or $\beta$ in $R$ and $\Sigma$ will indicate  the  curve $\alpha$ or $\beta$. 

Differentiating   (\ref{eq4}) with respect to   $s$, taking into account  (\ref{eq40}), and using the notation \eqref{eqR}, we obtain
$$\langle-\epsilon_\alpha\kappa_\alpha\t_\alpha+R_\alpha\n_\alpha+\frac{\Sigma_\alpha}{\tau_\alpha} \b_\alpha,\t_\beta\rangle=\kappa_\beta\langle\b_\alpha,\b_\beta\rangle.$$
If $U(s)=-\epsilon_\alpha\kappa_\alpha\t_\alpha+R_\alpha\n_\alpha+\frac{\Sigma_\alpha}{\tau_\alpha} \b_\alpha$,
the above equation is 
$$\langle U,\t_\beta\rangle=\kappa_\beta\langle\b_\alpha,\b_\beta\rangle.$$
Differentiating again with respect to $s$ and using   \eqref{eq40} and \eqref{eq4}, we get
$$\langle U'-\tau_\alpha\frac{\kappa_\alpha'}{\kappa_\alpha}-\tau_\alpha^2\n_\alpha,\t_\beta\rangle=0.$$
Let 
$$u=u(s)=U'-\tau_\alpha\frac{\kappa_\alpha'}{\kappa_\alpha}-\tau_\alpha^2\n_\alpha.$$
Differentiating twice with respect to $t$, we conclude $\langle u,\n_\beta\rangle=0$ and $\langle u,\b_\beta\rangle=0$. Thus $u=0$. Computing the coordinates of $u$ with respect to the Frenet frame of $\alpha$, we have
\begin{equation}\label{ta}
\begin{split}
 \kappa_\alpha'+\kappa_\alpha R_\alpha&=0,\\
 R_\alpha'+\Sigma_\alpha-\epsilon_\alpha\kappa_\alpha^2-\tau_\alpha^2&=0,\\
 \left(\frac{\Sigma_\alpha}{\tau_\alpha}\right)'+R_\alpha\tau_\alpha-\tau_\alpha\frac{\kappa_\alpha'}{\kappa_\alpha}&=0.
 \end{split}
 \end{equation}
 Similar equations are obtained for the curve $\beta$.
 
 \begin{theorem}\label{t41}
 Let $M$ be a maximal  translation surface where the generating curves are of Frenet type. If both curves are non-planar, then 
 \begin{equation}\label{eq88}
\kappa_\alpha^2\tau_\alpha=c_1\not=0,\ \kappa_\beta^2\tau_\beta=\bar{c}_1\not=0,\ \frac{\Sigma_\alpha}{\tau_\alpha}+\tau_\alpha=c_2,\ \frac{\Sigma_\beta}{\tau_\beta}+\tau_\beta=\bar{c}_2,
\end{equation}
 where $c_1$, $c_2$, $\bar{c}_1$ and $\bar{c}_2$ are constants.
 \end{theorem}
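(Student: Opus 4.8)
The plan is to extract the claimed conserved quantities directly from the three scalar relations in \eqref{ta}, which encode the vanishing $u=0$, together with their analogues for $\beta$. The key observation is that \eqref{ta} is a closed system in the invariants $\kappa_\alpha$, $\tau_\alpha$ of a single curve once we unwind the definitions of $R_\alpha$ and $\Sigma_\alpha$ in \eqref{eqR}. Since the hypotheses are symmetric in $\alpha$ and $\beta$, it suffices to carry out the argument for $\alpha$ and then cite symmetry for $\beta$; so I would prove $\kappa_\alpha^2\tau_\alpha=c_1\neq 0$ and $\frac{\Sigma_\alpha}{\tau_\alpha}+\tau_\alpha=c_2$, and remark that the same reasoning gives the barred statements.

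**Deriving $\kappa^2\tau$ constant.**

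First I would read off the first equation of \eqref{ta}, namely $\kappa_\alpha'+\kappa_\alpha R_\alpha=0$. Substituting $R_\alpha=\frac{\kappa_\alpha'}{\kappa_\alpha}+\frac{\tau_\alpha'}{\tau_\alpha}$ from \eqref{eqR} turns this into
$$
\kappa_\alpha'+\kappa_\alpha'+\kappa_\alpha\frac{\tau_\alpha'}{\tau_\alpha}=0,
\qquad\text{i.e.}\qquad
2\frac{\kappa_\alpha'}{\kappa_\alpha}+\frac{\tau_\alpha'}{\tau_\alpha}=0.
$$
This is exactly $\big(\log(\kappa_\alpha^2\tau_\alpha)\big)'=0$, so $\kappa_\alpha^2\tau_\alpha$ is a constant $c_1$. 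The value is nonzero because the curve is non-planar (so $\tau_\alpha\neq0$ by the Frenet setup) and is not a straight line (so $\kappa_\alpha\neq0$, as discussed in Section~\ref{s2}); hence $c_1\neq0$. This immediately yields the first equality in \eqref{eq88}, and symmetry gives $\kappa_\beta^2\tau_\beta=\bar c_1\neq0$.

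**Deriving $\frac{\Sigma}{\tau}+\tau$ constant, and the main obstacle.**

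For the second invariant I would turn to the third equation of \eqref{ta},
$$
\left(\frac{\Sigma_\alpha}{\tau_\alpha}\right)'+R_\alpha\tau_\alpha-\tau_\alpha\frac{\kappa_\alpha'}{\kappa_\alpha}=0.
$$
Again expanding $R_\alpha=\frac{\kappa_\alpha'}{\kappa_\alpha}+\frac{\tau_\alpha'}{\tau_\alpha}$, the two terms $\pm\tau_\alpha\frac{\kappa_\alpha'}{\kappa_\alpha}$ cancel, leaving
$$
\left(\frac{\Sigma_\alpha}{\tau_\alpha}\right)'+\tau_\alpha\frac{\tau_\alpha'}{\tau_\alpha}=0,
\qquad\text{i.e.}\qquad
\left(\frac{\Sigma_\alpha}{\tau_\alpha}\right)'+\tau_\alpha'=0.
$$
This is $\big(\frac{\Sigma_\alpha}{\tau_\alpha}+\tau_\alpha\big)'=0$, giving $\frac{\Sigma_\alpha}{\tau_\alpha}+\tau_\alpha=c_2$, and symmetry finishes $\beta$. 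The computation is essentially mechanical once the notation \eqref{eqR} is unpacked, so I do not expect a genuine analytic obstacle here. The part requiring the most care is the justification that $c_1\neq0$ and $\tau_\alpha\neq0$: I must invoke the standing assumption (from the remark in Section~\ref{s2} and the hypothesis of this theorem) that the curves are non-planar and not straight lines, so that both $\kappa_\alpha$ and $\tau_\alpha$ are nowhere zero and the logarithmic manipulations above are legitimate on the whole parameter interval. The middle equation of \eqref{ta} is not needed for the stated conclusion, though it provides the additional compatibility relation that will be exploited in the subsequent classification.
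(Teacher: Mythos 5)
Your proposal is correct and follows essentially the same route as the paper: both arguments read the conserved quantities off the system \eqref{ta} combined with the definition \eqref{eqR} of $R$, the only (immaterial) difference being that you substitute $R=\frac{\kappa'}{\kappa}+\frac{\tau'}{\tau}$ directly into the third equation so the terms cancel at once, whereas the paper first records $R=-\kappa'/\kappa$ from the first equation and then uses $\tau'=-2\tau\kappa'/\kappa$. (One cosmetic point: since $\tau$ may be negative, it is cleaner to write $(\kappa^2\tau)'=\kappa^2\tau\bigl(2\tfrac{\kappa'}{\kappa}+\tfrac{\tau'}{\tau}\bigr)=0$ rather than invoke $\log(\kappa^2\tau)$.)
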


 \begin{proof}  The next argument is valid for both curves. From the first equation of \eqref{ta} and the definition of $R$ we have 
\begin{equation}\label{eq18}
R=-\frac{\kappa'}{\kappa}.
\end{equation}
 By the definition of $R$, we obtain  $2\kappa'\tau+\kappa\tau=0$ which means that the derivative of $\kappa^2\tau$ is $0$, hence $\kappa^2\tau$ is constant. From the third equation of \eqref{ta} together \eqref{eq18}, we have 
 $$\left(\frac{\Sigma}{\tau}+\tau\right)'=-\tau R+\tau\frac{\kappa'}{\kappa}+\tau'=2\tau\frac{\kappa'}{\kappa}+\tau'=0.$$
 This proves that $\Sigma/\tau+\tau$ is a constant function.
  
\end{proof}

A curve with constant curvature and constant torsion satisfies the first condition in \eqref{eq88} and it is a candidate to be a generating curve of a maximal translation surface. For this reason, we focus on circular helices.   By a circular helix in $\rr^3$ we mean a Frenet curve with constant curvature and constant torsion. These curves are invariant by a uniparametric group of helicoidal motions whose axis is timelike or spacelike. In $\rr^3$ there are different types of circular helices, depending on the group of rotations \cite{lo2}. When these helices are spacelike curves, we have:  
\begin{enumerate}
\item Helices of type I. If $r^2>h^2>0$, then
\begin{equation}\label{h1}
\alpha(s)=(r\cos \varphi(s),r\sin \varphi(s),h \varphi(s)),\quad \varphi(s)=\frac{s}{\sqrt{r^2-h^2}}.
\end{equation}
\item Helices of type II. For any $r,h\not=0$, we have 
\begin{equation}\label{h2}
\alpha(s)=(h\varphi(s),r\sinh \varphi(s),r\cosh \varphi(s)),\quad \varphi(s)=\frac{s}{\sqrt{h^2+r^2}}.
\end{equation}
\item Helices of type III. If $h^2>r^2>0$, then
\begin{equation}\label{h3}
\alpha(s)=(h\varphi(s),r\cosh \varphi(s),r\sinh \varphi(s)),\quad \varphi(s)=\frac{s}{\sqrt{h^2-r^2}}.
\end{equation}
\end{enumerate}

All these curves are parametrized by arc-length. After minor modifications on the inequalities between $r$ and $h$, we obtain the corresponding timelike circular helices.  For example, for the helix  of type I, if we replace $\varphi$ by $\varphi(s)=\frac{s}{\sqrt{h^2-r^2}}$ we obtain a timelike curve with constant curvature and constant torsion.

For a circular helix, the curvature is constant. Thus the following result is immediate from \eqref{eq40}.  

\begin{proposition} \label{pr41}
If  $\alpha$ is a circular spacelike helix, then the  surface  
$\Psi(s,t)=\alpha(s)+ \alpha(t)$ has zero mean curvature everywhere.
\end{proposition}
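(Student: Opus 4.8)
The plan is to verify the single scalar identity \eqref{eq40} directly, since by the computation preceding it \eqref{eq40} is precisely the condition $H=0$ for a translation surface whose two generating curves are of Frenet type. Here the two generating curves coincide, $\beta=\alpha$, so $\kappa_\beta=\kappa_\alpha=\kappa$ is the constant (nonzero) curvature of the helix and $\tau$ its constant torsion. Writing $\t,\n,\b$ for the Frenet frame of $\alpha$, equation \eqref{eq40} reads $\kappa\langle\b(s),\t(t)\rangle=\kappa\langle\t(s),\b(t)\rangle$, so since $\kappa\neq0$ it suffices to prove that the scalar function $F(s,t)=\langle\b(s),\t(t)\rangle$ is symmetric, i.e. $F(s,t)=F(t,s)$.

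First I would exploit that a circular helix is the orbit of the one-parameter group of helicoidal isometries that leaves it invariant, as recalled before \eqref{h1}. If $A_s\in\mathrm{SO}(1,2)$ denotes the metric-preserving linear part of this group, then the Frenet frame is equivariant, $\t(s)=A_s\t(0)$, $\n(s)=A_s\n(0)$, $\b(s)=A_s\b(0)$, because an orientation-preserving isometry carries a Frenet frame to a Frenet frame. Since the $A_s$ form a group of isometries, $F(s,t)=\langle A_s\b(0),A_t\t(0)\rangle=\langle\b(0),A_{t-s}\t(0)\rangle$, so $F$ depends only on the difference $t-s$. Writing $F(s,t)=g(t-s)$ with $g(u)=\langle\b(0),A_u\t(0)\rangle$, the desired symmetry $F(s,t)=F(t,s)$ becomes exactly the statement that the one-variable function $g$ is even.

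The whole matter thus reduces to proving that $g$ is even, and I expect this to be the only real obstacle. The clean way is to introduce the reflection $C$ across the plane $\mathrm{span}(\t(0),\b(0))$, that is, the involutive isometry determined by $C\t(0)=\t(0)$, $C\b(0)=\b(0)$, $C\n(0)=-\n(0)$ (this is an isometry for either sign $\epsilon=\pm1$, since the reflecting plane is nondegenerate, and it satisfies $C^{*}=C=C^{-1}$). Reading off from \eqref{f1} that the generator $X=\frac{d}{ds}A_s|_{s=0}$ is the Frenet operator, so that $X\t(0),X\b(0)$ are multiples of $\n(0)$ while $X\n(0)\in\mathrm{span}(\t(0),\b(0))$, a one-line check on the three basis vectors gives $CXC^{-1}=-X$, hence $CA_sC^{-1}=A_{-s}$. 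Using $C\t(0)=\t(0)$, $C\b(0)=\b(0)$ and $C^{*}=C$ one then gets
\[
g(-u)=\langle\b(0),A_{-u}\t(0)\rangle=\langle\b(0),CA_u\t(0)\rangle=\langle C\b(0),A_u\t(0)\rangle=\langle\b(0),A_u\t(0)\rangle=g(u),
\]
so $g$ is even, $F$ is symmetric, and \eqref{eq40} holds, i.e. $H=0$ everywhere. The argument is uniform in the causal character of $\n$ (the sign $\epsilon$) and of the axis, since the only structural input is the shape of the Frenet equations \eqref{f1}. As a low-tech fallback, the evenness can instead be checked by brute force from \eqref{h1}--\eqref{h3}: for a helix of type I, for instance, a direct computation of the frame yields $F(s,t)=hr\,\omega^{2}\bigl(1-\cos(\omega(s-t))\bigr)$ with $\omega=1/\sqrt{r^2-h^2}$, which is manifestly symmetric, and the analogous computation for types II and III produces the corresponding hyperbolic expressions.
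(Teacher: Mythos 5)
Your proof is correct, and it is in fact more complete than the one in the paper. The paper disposes of this proposition in one line: since $\kappa$ is constant, the claim is ``immediate from \eqref{eq40}''. But after cancelling the constant $\kappa$ from both sides of \eqref{eq40} with $\beta=\alpha$, one is still left with the genuinely nontrivial identity $\langle \b(s),\t(t)\rangle=\langle \t(s),\b(t)\rangle$, i.e.\ the symmetry of $F(s,t)=\langle\b(s),\t(t)\rangle$; the paper does not address this, whereas your argument supplies exactly the missing step. Your route --- equivariance of the Frenet frame under the one-parameter isometry group $A_s=e^{sX}$ generated by the Frenet operator, reduction to evenness of $g(u)=\langle\b(0),A_u\t(0)\rangle$, and the reflection $C$ across $\mathrm{span}(\t(0),\b(0))$ conjugating $X$ to $-X$ --- is clean, uniform in $\epsilon$ and in the causal character of the axis, and would even cover constant-$(\kappa,\tau)$ curves not on the paper's list \eqref{h1}--\eqref{h3} (e.g.\ the parabolic case $\tau^2=\kappa^2$ with lightlike axis), since it only uses that the Frenet matrix is constant and skew-adjoint. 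Your fallback computation is also right: for type I one indeed gets $F(s,t)=hr\,\omega^2\bigl(1-\cos(\omega(s-t))\bigr)$. The only point worth stating explicitly if you write this up is the (easy) verification that the generator $X$ is skew-adjoint for the Lorentzian metric, so that $A_s$ really is a one-parameter group of isometries; this follows at once from the Frenet equations \eqref{f1} checked on the frame at $s=0$.
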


As we have pointed previously, this surface can be have regions that are spacelike and other that are timelike, hence we have stated the proposition by saying that the surface has zero mean curvature instead to say that it is maximal. We give an example showing this behaviour on the causal character of the points of the surface.

\begin{example} Consider a helix of type II. Let $r=1$ and let   $\alpha(s)=(hs,\cosh s,\sinh s)$, $h>0$. Then the surface $\Psi(s,t)$ is degenerated only when $s=t$. If $h=1$, the surface is timelike everywhere.  However, for $h=2$, the surface has regions that are spacelike and timelike.
\end{example}

In the following result, we prove that if a generating curve of a  maximal translation surface is a circular helix, then the other one is congruent with the given helix.

\begin{theorem}\label{t42} Let $M$ be a maximal translation surface. If one of the generating curves is a circular helix, then the other curve is a translation of the same helix. \end{theorem}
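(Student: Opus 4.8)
The plan is to prove that the partner curve $\beta$ must itself be a circular helix congruent to $\alpha$, and then that the congruence can only be a translation. I work under the standing hypothesis of this section, namely that both generating curves are of Frenet type and non-planar, and I write $\kappa_\alpha=a$, $\tau_\alpha=b$ with $a,b\neq0$ for the given helix; note that $\beta$ cannot be planar, for by Theorem \ref{t1} this would force $\alpha$ to be planar as well. A first useful observation is that the middle equation of \eqref{ta}, specialized to $\alpha$, reads $R_\alpha'+\Sigma_\alpha-\epsilon_\alpha\kappa_\alpha^2-\tau_\alpha^2=0$; since a helix has $R_\alpha=0$ and, by \eqref{eqR}, $\Sigma_\alpha=a^2+b^2$, this collapses to $(1-\epsilon_\alpha)a^2=0$, forcing $\epsilon_\alpha=1$. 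Thus the normal of $\alpha$ is spacelike, and the vector $\vec A:=\tau_\alpha\t_\alpha-\kappa_\alpha\b_\alpha=b\t_\alpha-a\b_\alpha$ is constant, since $\vec A'=b\kappa_\alpha\n_\alpha-a\tau_\alpha\n_\alpha=0$ by \eqref{f1}. This fixed vector is the axis direction of the helix and will organize the argument.

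Next I would exploit that the identities derived in this section simplify drastically because $\kappa_\alpha,\tau_\alpha$ are constant. Equation \eqref{eq4} becomes $\tau_\alpha\langle\n_\alpha,\t_\beta\rangle=\kappa_\beta\langle\n_\alpha,\b_\beta\rangle$, while the vector $U$ reduces to $U=-a\t_\alpha+\frac{a^2+b^2}{b}\b_\alpha$, so that the relation $\langle U,\t_\beta\rangle=\kappa_\beta\langle\b_\alpha,\b_\beta\rangle$ together with \eqref{eq40} yields three scalar identities among the pairings of the two Frenet frames. Differentiating these once more in $s$, with the $\alpha$-frame coefficients being independent of $s$ and using \eqref{f1} together with \eqref{eq88}, I expect to obtain, for each fixed $t$, a single identity of the shape $c(t)\langle\n_\alpha(s),\t_\beta\rangle=\kappa_\beta'\langle\n_\alpha(s),\n_\beta\rangle$ valid for all $s$, where $c(t)=-\epsilon_\beta\kappa_\beta^2-b\tau_\beta+b\bar c_2-b^2$ and $\bar c_2$ is the constant of \eqref{eq88}.

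The crux is then a linear-independence argument. Since $\alpha$ is a genuine helix, the Frenet equations give $\n_\alpha''=(b^2-a^2)\n_\alpha$, so as $s$ varies $\n_\alpha(s)$ sweeps a one-parameter family contained in the fixed plane orthogonal to $\vec A$, and the two functions $s\mapsto\langle\n_\alpha(s),\t_\beta\rangle$ and $s\mapsto\langle\n_\alpha(s),\n_\beta\rangle$ are linearly independent unless the projections of $\t_\beta$ and $\n_\beta$ onto that plane are proportional. In the generic (independent) case the displayed identity forces both coefficients to vanish: $\kappa_\beta'=0$ and $c(t)=0$. The first gives that $\kappa_\beta$ is constant, whence $\tau_\beta=\bar c_1/\kappa_\beta^2$ is constant too by \eqref{eq88}, so $\beta$ is a circular helix; the vanishing of $c(t)$, combined with $\kappa_\beta^2\tau_\beta=\bar c_1$ and the mirror relations obtained by interchanging the roles of $\alpha$ and $\beta$, pins down $\kappa_\beta=a$, $\tau_\beta=b$ and $\epsilon_\beta=1$. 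The degenerate case of proportional projections must be treated separately and shown to reduce to $\alpha$ and $\beta$ sharing the same axis $\vec A$.

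Finally, once $\alpha$ and $\beta$ are circular helices with identical curvature, torsion and causal character, the fundamental theorem of curves in $\rr^3$ shows they are congruent by a rigid motion $g$. The last step is to prove that $g$ can be taken to be a translation: the frame identities \eqref{eq40}, \eqref{eq4} and $\langle U,\t_\beta\rangle=\kappa_\beta\langle\b_\alpha,\b_\beta\rangle$ must hold for all $(s,t)$, and since they tie the constant axis $\vec A$ of $\alpha$ to the corresponding axis $b\t_\beta-a\b_\beta$ of $\beta$, they leave no room for a nontrivial rotational part once the surface is required to have zero mean curvature; after normalizing $\alpha$ to standard position this yields $\beta(t)=\alpha(t)+v$ up to reparametrization, recovering the self-sum of Proposition \ref{pr41}. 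I expect the two hardest points to be exactly the third step, proving that the partner of a helix is again a helix (the constancy of $\kappa_\beta$ and $\tau_\beta$), which is precisely where the failure of diagonalizability noted in the introduction would block a naive Euclidean-style argument, and the final reduction of the congruence to a pure translation.
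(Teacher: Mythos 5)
Your one concretely executed deduction already goes wrong, and the remaining steps are announced rather than carried out. The claim that $\epsilon_\alpha=1$ is forced cannot be right: Proposition \ref{pr41} and the example following it show that a helix of type II (whose normal is timelike, i.e.\ $\epsilon_\alpha=-1$) generates a translation surface with zero mean curvature that is spacelike on open regions, so such helices must be covered by Theorem \ref{t42}, yet your argument excludes them. The source of the error is that you take the printed formula for $\Sigma$ in \eqref{eqR} at face value. If you redo the differentiation of \eqref{eq4} using \eqref{f1}, the coefficient of $\b_\alpha$ in $U$ comes out as $\bigl((\kappa_\alpha'/\kappa_\alpha)'+\epsilon_\alpha\kappa_\alpha^2+\tau_\alpha^2\bigr)/\tau_\alpha$ (consistent with the $\epsilon_\alpha$ that does appear in \eqref{eq5}); with this corrected coefficient the middle equation of \eqref{ta} is satisfied identically by every circular helix and yields no restriction on $\epsilon_\alpha$. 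So your opening step both rests on a misprint and reaches a false conclusion.

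Beyond that, the proposal is a programme rather than a proof. The key identity $c(t)\langle\n_\alpha,\t_\beta\rangle=\kappa_\beta'\langle\n_\alpha,\n_\beta\rangle$ is only ``expected''; the degenerate case of your linear-independence argument is explicitly deferred; and the decisive final point --- that the congruence between the two helices is a pure translation rather than a general rigid motion --- is asserted (``no room for a rotational part'') with no computation, even though you yourself identify it as one of the two hardest steps. The paper's proof avoids all of this machinery: it substitutes the explicit parametrization \eqref{h1} (resp.\ \eqref{h2}, \eqref{h3}) into \eqref{eq40}, expands the result as $A_0(t)+A_1(t)\sin s+A_2(t)\cos s=0$, sets $A_0=A_1=A_2=0$, and solves this first-order system for $\beta'$ directly, finding that $\beta_3'$ is constant and $(\beta_1',\beta_2')$ traces a circle at constant angular speed, whence $\beta(t)=\alpha(t)$ up to translation. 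If you want to keep the Frenet-frame route, you must first repair the $\epsilon$ issue and then actually close the two gaps you flag; as it stands the argument neither covers all helices nor establishes the translation conclusion.
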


\begin{proof} Assume that $M$ is parametrized by \eqref{tt} and that   $\alpha$   a circular helix. We prove that $\beta$ is congruent to $\alpha$.  Although there are three types of circular helices,  the arguments are similar in all cases. We do the proof when   $\alpha$ is   of type I,   \eqref{h1}.   The computation of \eqref{eq40} gives a polynomial of type $A_0(t)+A_1(t)\sin s+A_2(t)\cos s=0$. The coefficients $A_i$ are
\begin{equation*}
\begin{split}
A_2&=h\beta_2'-(h^2-r^2)(\beta'\times\beta'')_2,\\
A_1&=h\beta_1'-(h^2-r^2)(\beta'\times\beta'')_1,\\
A_0&=-r^2\beta_3'+h(h^2-r^2)(\beta'\times\beta'')_3,
\end{split}
\end{equation*}
where the subindex in $\beta$ and $\beta'\times\beta''$ indicates the corresponding coordinates in  $\rr^3$. Combining the three equations, we have
\begin{equation*}
\begin{split}
h\beta_1'^2+h\beta_2'^2&=(h^2-r^2)(\beta_2'(\beta'\times\beta'')_2+\beta_1'(\beta'\times\beta'')_1)\\
&=(h^2-r^2)\beta_3'(\beta'\times\beta'')_3= \frac{r^2\beta_3'^3}{h}
\end{split}
\end{equation*}
Thus
$$\frac{r^2\beta_3'^3}{h}=h\beta_1'^2+h\beta_2'^2=h+h\beta_3'^2=h(1+\beta_3'^2).$$
 Then $\beta_3'^2=\frac{h^2}{r^2-h^2}$, and after a vertical translation of $\beta$, we can assume 
 $$\beta_3(t)=\frac{h}{\sqrt{r^2-h^2}}t.$$
 Hence
 $$\beta_1'^2+\beta_2'^2=1-\beta_3'^2= \frac{r^2}{r^2-h^2}.$$
 Then there is $\theta=\theta(t)$ such that 
 $$\beta_1'(t)=-\frac{r}{\sqrt{r^2-h^2}}\sin\theta(t),\quad \beta_2'(t)=\frac{r}{\sqrt{r^2-h^2}}\cos\theta(t).$$
 Equation $A_0=0$ gives 
 $$\theta'(t)=\frac{1}{\sqrt{r^2-h^2}}.$$
 After a translation on the parameter $s$, we have $\theta(t)=\varphi(t)$. Then we deduce that $\beta(t)=\alpha(t)$ up to a vertical translation.
 
\end{proof}

 %%%%
 \section{The generating curves are of Frenet type: a curve is planar}\label{s5}
 %%%%%%%%%%%%%%%%%%%%%%%%%%%%%
 
 In this section, we study the case that the two generating curves are   Frenet curves and one of them is contained in a plane of $\rr^3$.  We discard the trivial case that the surface is a plane. Without loss of generality, we assume that $\alpha$ is planar. Then  $\tau_\alpha=0$.  We repeat the computations of the beginning of Sect. \ref{s4} obtaining
\begin{equation}\label{eq5}
\left(\left(\frac{\kappa_\alpha'}{\kappa_\alpha}\right)'+\epsilon_\alpha\kappa_\alpha^2\right) \langle  \b_\alpha,\t_\beta\rangle=0.
\end{equation}
If $\langle\b_\alpha,\t_\beta\rangle=0$ identically, then the principal curvature $\kappa_1$ of $M$ is $0$ identically and this proves that   $M$ is a plane which was initially discarded. Thus 
\begin{equation}\label{kk}
\left(\frac{\kappa_\alpha'}{\kappa_\alpha}\right)'+\epsilon_\alpha\kappa_\alpha^2=0.
\end{equation}
The solution of this equation depends if $\epsilon_\alpha$ is $-1$ or $1$. We give the  solutions of \eqref{kk} in the following proposition. We drop the dependence on $\alpha$.

\begin{proposition} \label{pr3}
The solutions of Eq. \eqref{kk} are:
\begin{enumerate} 
\item Case $\epsilon=1$. Then the solution  is $\kappa(s)=\frac{c}{\cosh(cs)}$ and the curve is given in \eqref{cu1}. 
\item Case $\epsilon=-1$. We have three types of solutions.
\begin{enumerate}
\item Case $\kappa(s)=\frac{c}{\cos(cs)}$  and the curve is given in \eqref{cu2}.
\item Case $\kappa(s)=\frac{c}{\sinh(cs)}$ and the curve is given in \eqref{cu3}. 
\item Case $\kappa(s)=\frac{1}{s}$. Then $\alpha$ is parametrized by  
$$\alpha(s)=\frac{1}{2}\left(\frac{s^2}{2}+\log(s),0, \frac{s^2}{2}-\log(s)\right).$$
\end{enumerate}
\end{enumerate}
\end{proposition}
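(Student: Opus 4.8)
The plan is to reduce \eqref{kk} to a first-order autonomous equation, integrate it once, and then recover the planar curve from its curvature. Since $\tau=0$ we are dealing with a planar Frenet curve, so once $\kappa$ is known the curve is determined up to a rigid motion by the fundamental theorem of planar curves in the relevant (spacelike or timelike) plane geometry.

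First I would set $v=\log\kappa$, so that $\kappa'/\kappa=v'$ and $\kappa^2=e^{2v}$. Then \eqref{kk} becomes the autonomous second-order equation
$$v''+\epsilon e^{2v}=0.$$
Multiplying by $v'$ and integrating once yields the first integral
$$v'^2+\epsilon e^{2v}=C$$
for a constant $C$, which in terms of $\kappa$ reads $\kappa'^2=C\kappa^2-\epsilon\kappa^4=\kappa^2(C-\epsilon\kappa^2)$. This is separable, and the whole classification is driven by the admissible signs of $C$.

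Next I would solve this separable ODE case by case. For $\epsilon=1$ positivity of $\kappa'^2$ forces $C=c^2>0$, and separation of variables (or direct verification) gives $\kappa=c/\cosh(cs)$ after absorbing the integration constant by a translation $s\mapsto s+s_0$. For $\epsilon=-1$ all three signs of $C$ are admissible: $C=c^2>0$ gives $\kappa=c/\sinh(cs)$, $C=-c^2<0$ gives $\kappa=c/\cos(cs)$, and $C=0$ gives $\kappa'=\pm\kappa^2$, hence $\kappa=1/s$ up to translation. This produces exactly the four curvature functions in the statement.

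Finally I would reconstruct $\alpha$. Because $\tau=0$, the binormal $\b$ is constant and $\alpha$ lies in the plane $\b^{\perp}$, which is spacelike when $\epsilon=1$ and timelike when $\epsilon=-1$. Writing the unit tangent as $\t=(\cos\phi,\sin\phi,0)$ in the spacelike case and $\t=(\cosh\phi,0,\sinh\phi)$ in the timelike case, the Frenet equation $\t'=\kappa\n$ reduces to $\phi'=\kappa$; integrating $\phi$ and then $\alpha'=\t$ recovers the curve. For the three classical curvatures this matches the parametrizations \eqref{cu1}, \eqref{cu2}, \eqref{cu3} already computed in Section \ref{s2}, while for the new case $\kappa=1/s$ one gets $\phi=\log s$ and a direct integration of $(\cosh(\log s),\sinh(\log s))=\tfrac12(s+1/s,\,s-1/s)$ yields the stated parametrization. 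The main obstacle I anticipate is the bookkeeping: determining exactly which signs of $C$ keep $\kappa$ real and positive in each causal case, carrying the signs correctly through the Lorentzian Frenet reconstruction for the $\kappa=1/s$ solution, and checking that every remaining integration constant can indeed be absorbed by a translation in $s$ or a rigid motion of $\rr^3$.
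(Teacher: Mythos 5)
Your proof is correct and follows the same overall skeleton as the paper's: reduce \eqref{kk} to a first-order separable equation for $\kappa$, classify its solutions, and then reconstruct the planar curve by integrating an angle function $\phi$ with $\phi'=\kappa$. The only genuine difference is the integration device. The paper treats $\kappa'$ as a function of $\kappa$ via $z=\sqrt{\kappa'}$, rewrites \eqref{kk} for $\epsilon=-1$ as $2\kappa z^3z'=z^4+\kappa^4$, and applies the homogeneous substitution $v=z/\kappa$, so the trichotomy comes from whether $v=\pm1$ or not; the case $\epsilon=1$ is simply quoted from \cite{hl}. You instead set $v=\log\kappa$ and use the energy first integral $(\kappa'/\kappa)^2+\epsilon\kappa^2=C$, which handles both signs of $\epsilon$ uniformly and makes the trichotomy for $\epsilon=-1$ transparent as the sign of $C$ (with $\epsilon=1$ forcing $C>0$, hence a single solution). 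This is arguably cleaner and self-contained. Two small points to tidy up: the first integral admits the spurious constant solution $\kappa^2=\epsilon C$, which must be discarded because a nonzero constant $\kappa$ does not satisfy \eqref{kk} itself; and in the reconstruction for $\epsilon=-1$ you should record, as the paper does, that $\b$ is a constant spacelike vector so the curve lies in a timelike plane, which is what justifies the hyperbolic parametrization $\t=(\cosh\phi,0,\sinh\phi)$. Neither point affects the validity of your argument.
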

\begin{proof}
The case $\epsilon=1$  is known, see \cite{hl}.

Let now $\epsilon=-1$.    Define the function $z=z(\kappa)=\sqrt{\kappa'}$.   Then \eqref{kk} is $2\kappa z^3z'=z^4+\kappa^4$. Let $v=z/\kappa$, obtaining 
$2\kappa v^3v'=1-v^4$. A particular solution is $v=\pm 1$. This gives $\kappa(s)=1/s$. If $v^2\not=1$, then we have two solutions, $\kappa(s)= \frac{c}{\cos(cs)}$ and $\kappa(s)= \frac{c}{\sinh(cs)}$. This gives the three solutions given in  (2). 

Since the curve is contained in a timelike plane, we can assume that this plane is the $xz$-plane. Suppose that we parametrize the curve  as 
$\alpha(s)=(x(s),0,z(s))$. Then there is a smooth function $\phi(s)$ such that  $x'(s)=\cosh\phi(s)$, $z'(s)=\sinh\phi(s)$ and $\phi'(s)=\kappa(s)$. The curves that are solutions appear   in the statement of the proposition. 
\end{proof}

We now prove the first statement of Thm. \ref{t1} when both curves are of Frenet type, that is, if a curve of a maximal translation surface is planar, then the other is also planar. 

\begin{theorem}\label{t43} 
Let $M$ be a maximal translation surface whose generating curves are of Frenet type.  If a curve is planar, then the other one is also planar.
\end{theorem}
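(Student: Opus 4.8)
The plan is to argue by contradiction: assuming $\alpha$ is planar, I will show that the binormal $\b_\beta$ of the second curve is forced to take its values in a \emph{fixed} $2$-plane, and then deduce that this is incompatible with $\beta$ being non-planar, since the Frenet frame of a non-planar curve is a basis of $\rr^3$. Since $\alpha$ is planar we have $\tau_\alpha=0$, so the Frenet equations \eqref{f1} give $\b_\alpha'=\tau_\alpha\n_\alpha=0$; thus $\b_\alpha$ is a constant vector. Recall from the computation preceding the statement that $\kappa_\alpha$ satisfies \eqref{kk} and that $p:=\langle\b_\alpha,\t_\beta\rangle$ is not identically zero, as otherwise $\kappa_1\equiv0$ and $M$ is a plane, the case already discarded.

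First I would record the three components of $\b_\beta$ in the orthogonal Frenet frame $\{\t_\alpha,\n_\alpha,\b_\alpha\}$ of $\alpha$. The maximality equation \eqref{eq40} gives $\langle\t_\alpha,\b_\beta\rangle=\kappa_\alpha p/\kappa_\beta$, and its first derivative in $s$ (the $\tau_\alpha=0$ form of \eqref{eq4} used to obtain \eqref{eq5}) gives $\langle\n_\alpha,\b_\beta\rangle=\kappa_\alpha' p/(\kappa_\alpha\kappa_\beta)$. Writing $q:=\langle\b_\alpha,\b_\beta\rangle$ and using $\langle\n_\alpha,\n_\alpha\rangle=\epsilon_\alpha$, $\langle\b_\alpha,\b_\alpha\rangle=-\epsilon_\alpha$, the expansion of $\b_\beta$ in this basis is
\begin{equation*}
\b_\beta=\frac{p}{\kappa_\beta}\Bigl(\kappa_\alpha\t_\alpha+\epsilon_\alpha\frac{\kappa_\alpha'}{\kappa_\alpha}\n_\alpha\Bigr)-\epsilon_\alpha q\,\b_\alpha .
\end{equation*}

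The crux is the observation that the vector $\vec{w}:=\kappa_\alpha\t_\alpha+\epsilon_\alpha\frac{\kappa_\alpha'}{\kappa_\alpha}\n_\alpha$ is \emph{constant}: differentiating it with \eqref{f1} (with $\tau_\alpha=0$) yields $\vec{w}'=\bigl(\kappa_\alpha^2+\epsilon_\alpha(\kappa_\alpha'/\kappa_\alpha)'\bigr)\n_\alpha$, which vanishes identically by \eqref{kk}. In other words, $\vec{w}$ is a first integral of \eqref{kk}. Consequently $\b_\beta(t)=\frac{p}{\kappa_\beta}\vec{w}-\epsilon_\alpha q\,\b_\alpha$ is, for every $t$, a combination of the two fixed vectors $\vec{w}$ and $\b_\alpha$, so $\b_\beta$ takes its values in the fixed plane $\Pi=\mathrm{span}\{\vec{w},\b_\alpha\}$. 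I would note that $\Pi$ is genuinely $2$-dimensional: $\vec{w}$ lies in $\mathrm{span}\{\t_\alpha,\n_\alpha\}$ and is nonzero (its $\t_\alpha$-coefficient $\kappa_\alpha$ does not vanish), while $\b_\alpha$ is the complementary basis vector, so $\vec{w}$ and $\b_\alpha$ are independent.

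Finally, suppose for contradiction that $\tau_\beta\neq0$ on some interval. Since $\Pi$ is a linear subspace and $\b_\beta$ is $\Pi$-valued, so is $\b_\beta'=\tau_\beta\n_\beta$; hence $\n_\beta\in\Pi$, and then $\n_\beta'=-\epsilon_\beta\kappa_\beta\t_\beta+\tau_\beta\b_\beta\in\Pi$ together with $\tau_\beta\b_\beta\in\Pi$ forces $\t_\beta\in\Pi$ (as $\kappa_\beta\neq0$). Thus the whole Frenet frame $\{\t_\beta,\n_\beta,\b_\beta\}$ would lie in the $2$-plane $\Pi$, which is impossible since it is a basis of $\rr^3$. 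Therefore $\tau_\beta\equiv0$ and $\beta$ is planar. The only delicate points are spotting the conserved vector $\vec{w}$ and keeping track of the Lorentzian signs in the frame expansion; once $\b_\beta\in\Pi$ is established, the conclusion is a dimension count.
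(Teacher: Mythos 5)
Your argument is correct, and it takes a genuinely different route from the paper. The paper proves this theorem by direct computation: it inserts the explicit normal forms of the planar curve $\alpha$ (the solutions of \eqref{kk} listed in Proposition \ref{pr3}) into the equation $H=0$, separates the coefficients of the linearly independent trigonometric or hyperbolic functions of $s$, and manipulates the resulting relations among $\beta_1,\beta_2,\beta_3$ until the determinant $(\beta',\beta'',\beta''')$ is seen to vanish; this must be repeated for each normal form of $\alpha$, and two of the subcases are only sketched. Your proof replaces all of that with one frame computation. The key point, which checks out, is that $\vec{w}=\kappa_\alpha\t_\alpha+\epsilon_\alpha\frac{\kappa_\alpha'}{\kappa_\alpha}\n_\alpha$ is a first integral: with $\tau_\alpha=0$ the Frenet equations \eqref{f1} give $\vec{w}'=\bigl(\kappa_\alpha^2+\epsilon_\alpha(\kappa_\alpha'/\kappa_\alpha)'\bigr)\n_\alpha$, which vanishes by \eqref{kk}; your signs $\epsilon_\alpha$ and $-\epsilon_\alpha$ in the expansion of $\b_\beta$ over the pseudo-orthonormal frame of $\alpha$ are also right, and the whole argument is uniform in $\epsilon_\alpha$, so no case distinction is needed. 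The concluding dimension count is sound because $\kappa_\beta\neq 0$ (straight lines were discarded) and the Frenet frame of a Frenet-type curve is a genuine basis of $\rr^3$, so it cannot sit inside the fixed $2$-plane $\mathrm{span}\{\vec{w},\b_\alpha\}$. What your approach does not produce, and what the paper's coordinate computations are simultaneously preparing, are the explicit equations on $\beta$ that feed into the classification and the parametrizations of Theorem \ref{t2}; so the case-by-case work is not wasted in the paper, but as a proof of Theorem \ref{t43} alone your argument is shorter, conceptually cleaner, and closer in spirit to the Euclidean treatment cited in the introduction.
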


\begin{proof} We will use that the property that if a curve $\beta=\beta(t)$ satisfies $(\beta'(t), \beta''(t),\beta'''(t))=0$ for all $t$, then $\beta$ is contained in a plane.  We have different cases.
\begin{enumerate}
\item Case $\epsilon_\alpha=1$.  After a rigid motion, we can assume that $\alpha$ is contained in the $xy$-plane and  $\alpha(x)=(x,\frac{1}{c}\log(\cos (c x)),0)$.

 Let   $\beta(t)=(\beta_1(t),\beta_2(t),\beta_3(t))$ denote the  other generating curve   parameterized by arc length $t$. Then  the minimality condition (\ref{eq1}) gives
$$(\beta_1'\beta_3''-\beta_1''\beta_3')\sin(cx)+(c\beta_3'+\beta_2'\beta_3''-\beta_2''\beta_3')\cos(cx)=0.$$
Since the functions $\sin(cx)$ and $\cos(cx)$ are linearly independent, we deduce
\begin{equation}\label{eqbb}
\begin{split}
0&=\beta_1'\beta_3''-\beta_1''\beta_3',\\
0&= c\beta_3'+\beta_2'\beta_3''-\beta_2''\beta_3'.
\end{split}
\end{equation}
Combining both equations, we have $\beta_3'(-\beta_1''\beta_2'+c\beta_1'+\beta_2'\beta_1'')=0$. If $\beta_3$ is a constant function, then the surface is contained in the plane $z=\beta_3$.  On the contrary, $\beta_1''\beta_2'=c\beta_1'+\beta_2'\beta_1''$. From the first equation in (\ref{eqbb}), we obtain $\beta_1'\beta_3'''=\beta_1'''\beta_3'$. With these relations,    the determinant  $(\beta',\beta'',\beta''')$ is $0$. Therefore, $\beta$ is a planar curve and  $\tau_\beta=0$.

\item Case $\epsilon_\alpha=-1$.  We need to distinguish cases. The cases $\alpha(x)=(x,0,\frac{1}{c}\log(\cosh(cx)))$ and  $\alpha(x)=(0,\frac{1}{c}\log(\sinh(cz)),z)$ are similar to the case $\epsilon_\alpha=1$ and we omit the proof. 

We   prove the third case. So, suppose $\alpha(s)=\frac{1}{2}\left(\frac{s^2}{2}+\log(s),0, \frac{s^2}{2}-\log(s)\right)$.    Let $\beta(t)=(\beta_1(t),\beta_2(t),\beta_3(t))$. Then $H=0$ is a polynomial on $s$, $A_0(t)+A_2(t)s^2=0$. Equations $A_0=A_2=0$ are
\begin{equation*}
\begin{split}
0&=  2\beta_2'+\beta_2'\beta_1''-\beta_1'\beta_2''+\beta_2'\beta_3''-\beta_3'\beta_2''\\
0&=\beta_1'\beta_2''-\beta_2'\beta_1''+\beta_2'\beta_3''-\beta_3'\beta_2''.
\end{split}
\end{equation*}
Differentiating with respect to $t$, we get 
$$-\beta_2''=(\beta'\times\beta''')_1=(\beta'\times\beta''')_3.$$
Looking both equations on $\beta_2'$ and $\beta_2''$, there must no trivial solutions because, otherwise, $\beta_2$ is constant and the surface would be a plane of equation $y=\beta_2$. Thus, the determinant of coefficients of these equations must vanish identically, obtaining 
$$\beta_1'-\beta_3'+\beta_1'\beta_3''-\beta_3'\beta_1''=0.$$
This gives $\beta_1''-\beta_3''=(\beta'\times\beta''')_2$. Then
$$\langle \beta'',\beta'\times\beta'''\rangle=(\beta_1''-\beta_3'')(\beta'\times\beta''')_1+\beta_2'' (\beta'\times\beta''')_2 =0.$$
This proves that the curve is planar. 
 \end{enumerate}
\end{proof}

From Thm. \ref{t43}, we know that both generating curves are planar. The remaining of this section is devoted to calculate the parametrizations of these surfaces.  Since we know that $\beta$ is planar, a similar computation as it was done for $\alpha$ implies that the curvature $\kappa_\beta$ satisfies
$$\left(\frac{\kappa_\beta'}{\kappa_\beta}\right)'+\epsilon_\beta\kappa_\beta^2=0.$$
  We distinguish all cases $\epsilon_\alpha,\epsilon_\beta\in\{-1,1\}$. So, there are $10$ cases to discuss. As we will see, in many of them, there are not translation maximal surfaces. See Table \ref{table} as a summary of all cases.
  
  The strategy in the proofs  repeats in each case and it is bit tedious. We parametrize a maximal surface $M$ by \eqref{tt} and the curve $\alpha$ will be fixed and parametrized according to Prop. \ref{pr3}. The curve $\beta$ is also planar and thus, up to a rigid motion $A$, $\beta$ is also parametrized by the expression of  Prop. \ref{pr3}. We will write $\beta(t)=A\cdot\sigma(t)$, where $\sigma$ is a curve parametrized by Prop. \ref{pr3}. Let   $A=(a_{ij})$. We will use frequently that $A$ is orthogonal with respect to this Minkowski metric. For example,  the rows as well as the columns form a orthonormal basis of $\rr^3$. Also, if we denote the rows of  $A$ by   $A=[A_1,A_2,A_3]$, then $A_1\times A_2=-A_3$ and $A_2\times A_3=A_1$. 

We will obtain the parametrizations that appear in Thm. \ref{t2}.  Recall that a summary appears in Table  \ref{table}.

%%%%%%
\subsection{Case $\epsilon_\alpha=1$.}
The parametrization of $\alpha$ is $\alpha(s)=(s,\frac{1}{c}\log(\cos(cs)),0)$.
\begin{enumerate}
\item Case $\epsilon_\beta=1$. Then  $\beta(t)=A\cdot\sigma(t)$ where $\sigma(t)=(t,\frac{1}{d}\log\cos(dt),0)$. The minimality condition \eqref{eq1} is an equation of type $B_1(t)\sin(cs)+B_2(t)\cos(cs)=0$. Thus $B_1=B_2=0$. From $B_1=0$, we obtain $a_{12} a_{31}-a_{11} a_{32}=0$, hence $a_{23}=0$.
The coefficient $B_2$, once multiplied by $\cos(dt)$ is a polynomial $C_0+C_1\sin(dt)+C_2\cos(dt)=0$. Using that $A$ is an isometry, the equation $C_1=0$ becomes $a_{32}=0$.  Thus $a_{12}a_{31}=0$. 
\begin{enumerate}
\item Case $a_{12}=0$. Then $a_{22}=1$. In particular, $a_{21}=0$. Then $C_2=0$ reduces into $a_{31}(c+d)=0$. If $a_{31}=0$, then $A$ is the diagonal matrix $[1,1,-1]$, and $\beta=\alpha$. In particular, the third coordinate is $0$, proving that the surface is contained in the plane $z=0$. Otherwise, $d=-c$ and we have    $$A=\begin{pmatrix}\cosh\theta&0&\sinh\theta\\ 0&1&0\\ \sinh\theta&0&\cosh\theta\end{pmatrix}.$$
This gives the parametrization \eqref{para1}.
\item Case $a_{13}=0$. Then $a_{33}=1$ and  $a_{13}=0$. 
This implies that the surface is included in the plane $z=0$, so it is a plane. 
\end{enumerate}

\item Case $\epsilon_\beta=-1$.
  We know that $\beta(t)=A\cdot \sigma(t)$, where  $\sigma$ is parametrized by three types.

\begin{enumerate}
 
 \item Case   $\sigma(t)=(t,0,\frac{1}{d}\log(\cosh (dt))$.  Then \eqref{eq1} is    $B_1(t)\sin(cs)+B_2(t)\cos(cs)=0$. Equation $B_1=0$ gives $a_{11}a_{33}-a_{13}a_{31}=0$ and thus, $a_{22}=0$. Imposing $B_2(t)=0$, we have a polynomial equation of type $C_0+C_1 \sinh(dt)+C_2\cosh(dt)=0$. From  $C_1=0$, and using that $A$ is an isometry, we obtain   $a_{33}=0$, which it is not possible. This  case cannot occur.
 \item Case   $\sigma(t)=(0, \frac{1}{d}\log\sinh(dt),t)$ and $d>0$ is a positive constant.  Then Eq. \eqref{eq1} is  $B_1(t)\sin(cs)+B_2(t)\cos(cs)=0$ and $B_1=0$ is $a_{13} a_{32}-a_{12} a_{33}=0$. Thus $a_{21}=0$. Now $B_2=0$ is
 $$ ca_{32}\coth(dt)+a_{33}(c+da_{22})-d a_{23}a_{32}=0.$$
 This implies $a_{32}=0$  and $c+da_{22}=0$. Thus $a_{12}=0$. This gives $d=-c$ and 
 $$A=\left(
\begin{array}{ccc}
 \cosh \theta & 0 & \sinh \theta  \\
 0 & 1 & 0 \\
 \sinh \theta  & 0 & \cosh \theta  \\
\end{array}
\right).$$
  This gives the parametrization \eqref{para2}.

\item Case  $\sigma(t)=\frac{1}{2}\left(\frac{t^2}{2}+\log(t),0, \frac{t^2}{2}-\log(t)\right)$. Then \eqref{eq1} is   a $B_1(t)\sin(cs)+B_2(t)\cos(cs)=0$. Equation $B_1=0$ is
$$a_{13} a_{32}-a_{12} a_{33}=0.$$
Again, we have   $a_{21}=0$. Simplifying $B_2=0$, we get 
$$a_{11}(8+4c(a_{23}-a_{22}))t^2+4a_{11}a_{22}+a_{23})t^4=0.$$
Since the second row $A_2=(0,a_{22},a_{23})$ is a unit spacelike vector, then $a_{22}+a_{23}\not=0$. Thus $a_{11}=0$. This implies $a_{33}=0$, which it is not possible. This case cannot occur. 
\end{enumerate}

\end{enumerate}

%%%%
\subsection{Case $\epsilon_\alpha=-1$ and $\alpha(s)=(s,0,\frac{1}{c}\log\cosh(c s))$}
%%%%%%%%%
\begin{enumerate}
\item   Case  $\beta(t)=A\cdot\sigma(t)$, where  $\sigma(t)=(t,0, \frac{1}{d}\log\cosh(dt))$ and $d>0$.  Then Eq. \eqref{eq1} is  $B_1(t)\sinh(cs)+B_2(t)\cosh(cs)=0$. Now $B_1=0$ is $a_{13} a_{21}-a_{11} a_{23}=0$. 
This implies $a_{32}=0$. If $a_{11}=0$, and because $a_{21}$ cannot be $0$, then $a_{13}=0$. This proves that 
$$A=\begin{pmatrix}0&1&0\\ \cosh\theta&0&\sinh\theta\\ \sinh\theta&0&\cosh\theta\end{pmatrix}.$$
Then $B_2=0$ is $(c\cosh\theta+d)\cosh(dt)+c\sinh\theta\sinh(dt)=0$. This gives $d=-c$ and $\theta=0$. Then the surface is the plane of equation $z=0$.

Otherwise, by using that $A_2\times A_3=A_1$, we have $a_{11}=a_{22}a_{23}$, $a_{13}=a_{22}a_{31}$, $a_{21}=-a_{12}a_{33}$ and $a_{23}=-a_{12}a_{31}$. Then $B_2=0$ becomes $a_{12}\left((ca_{33}+ d)\cosh(dt)+a_{31}c\sinh(dt)\right)=0$. If $a_{12}=0$, then the surface is the plane of equation $y=0$, which it is not spacelike. Thus $a_{31}=0$, $a_{33}=1$ and $d=-c$. Then 
$$A=\begin{pmatrix}\cos\theta&\sin\theta&0\\ -\sin\theta&\cos\theta&0\\ 0&0&1\end{pmatrix}.$$
This gives the parametrization \eqref{para3}.
  
\item Case   $\sigma(t)=(0, \frac{1}{d}\log\sinh(dt),t)$ and $d>0$.  Then Eq. \eqref{eq1} is  $B_1(t)\sinh(cs)+B_2(t)\cosh(cs)=0$. Now $B_1=0$ is $a_{13} a_{22}-a_{12} a_{23}=0$. 
This implies $a_{31}=0$. The equation $B_2=0$ writes as 
$$a_{22}c\coth(dt)+a_{22}a_{33}+a_{23}(c-a_{32}d)=0.$$
This implies $a_{22}=0$. Since $a_{12}a_{23}=0$ and the second column of $A$ is $(a_{12},0,a_{32})$ with $a_{32}\not=1$, then $a_{23}=0$. Then the surface is included in the plane $y=0$, which it is not possible. This case cannot occur. 

  \item Case  $\sigma(t)=\frac{1}{2}\left(\frac{t^2}{2}+\log(t),0, \frac{t^2}{2}-\log(t)\right)$. Then \eqref{eq1} is    $B_1(t)\sinh(cs)+B_2(t)\cosh(cs)=0$. Equation $B_1=0$ is $a_{13} a_{21}-a_{11} a_{23}=0$. 
This implies $a_{32}=0$, $a_{12}=a_{22}a_{33}$ and $a_{13}=a_{22}a_{31}$. If $a_{12}=0$, then the surface is included in the plane $y=0$, which it is not possible.  Since the first row $A_1$ of $A$ is a unit spacelike vector then $a_{12}^2+a_{22}^2=1$.  Now we conclude $a_{22}=0$. Thus 
$$A=\begin{pmatrix}0&1&0\\ \cosh\theta&0&\sinh\theta\\ \sinh\theta&0&\cosh\theta\end{pmatrix}.$$
Then $B_2=0$ writes as $2+c e^{-\theta}+ce^{\theta}t^2=0$. This gives $c=0$, a contradiction. Thus this case is not possible.

\end{enumerate}

%%%%
%%%%
\subsection{Case $\epsilon_\alpha=-1$ and $\alpha(s)=(0,\frac{1}{c}\log\sinh(c s),s)$}
%%%%%%%%%
\begin{enumerate}

\item Case   $\sigma(t)=(0, \frac{1}{d}\log\sinh(dt),t)$ and $d>0$.  Equation \eqref{eq1} writes as $B_0(t)+B_1(t)\sinh(cs)+B_2(t)\cosh(cs)=0$. Equation $B_1=0$ is $a_{12} a_{23}-a_{13} a_{22}=0$. Thus $a_{31}=0$ and $a_{21}$, hence $a_{12}=a_{13}=0$. Then  the surface is included in the plane $x=0$, which it is not possible because this plane s timelike.

  \item Case  $\sigma(t)=\frac{1}{2}\left(\frac{t^2}{2}+\log(t),0, \frac{t^2}{2}-\log(t)\right)$.  Equation \eqref{eq1} is $B_0(t)+B_1(t)\sinh(cs)+B_2(t)\cosh(cs)=0$. Now $B_1=0$ gives $a_{11}a_{23}-a_{13}a_{21}=0$. Then $a_{32}=0$. If $a_{22}=0$, then the surface is the plane $x=0$, which it is not possible. The coefficient $B_2$ is a polynomial on $t$. Letting $B_2=0$, we obtain
   $(2+c(a_{33}-a_{31}))+c(a_{31}+a_{33})t^2=0$. This gives $a_{31}=-a_{33}$ and the file $A_3$ of $A$ would be lightlike, a contradiction. This case cannot occur.

\end{enumerate}

%%%%
\subsection{Case $\epsilon_\alpha=-1$ and $\alpha(s)=\frac12(\frac{s^2}{2}+\log(s),0,\frac{s^2}{2}-\log(s))$}
%%%%%%%%%

Assume  $\beta(t)=A\cdot\alpha(t)$.  Equation \eqref{eq1} is a polynomial on $s$ of type $B_0(t)+B_2(t)s^2=0$. The equation $B_2=0$ writes is
$$a_{11}a_{23}-a_{13}a_{21}+a_{21}a_{33}-a_{23}a_{31}=0.$$
This implies $(A_3)_2-(A_1)_2=0$, that is, $a_{32}=a_{22}$, hence $a_{12}=1$. If $a_{22}=0$, then 
\begin{equation}\label{a3}
A=\begin{pmatrix}0&1&0\\ \cosh\theta&0&\sinh\theta\\ \sinh\theta&0&\cosh\theta\end{pmatrix}.
\end{equation}
Now $B_2=4t^2$, obtaining a contradiction. 

Suppose $a_{22}\not=0$. Then $a_{12}=1$. Then 
$$A=\left(
\begin{array}{ccc}
 a_{11} & 1 &a_{11} \\
 a_{21}& a_{22} & a_{23}\\
a_{31}&a_{22} &a_{33}\\
\end{array}
\right).$$
But this matrix cannot be orthogonal.

 \begin{table}[hbtp]
 \begin{tabular}{|c|c|c|c|c|}
\hline
\diagbox{$\alpha$}{$\beta$}  & $\epsilon=1$  & $\epsilon=-1$ (a) & $\epsilon=-1$ (b) & $\epsilon=-1$ (c) \\
\hline
$\epsilon=1$ &  Eq. \eqref{para1} & X & Eq. \eqref{para2}& X\\
\hline
$\epsilon=-1$ (a) &   & Eq. \eqref{para3} & X & X\\
\hline
$\epsilon=-1$ (b) &   &  &X & X \\
\hline
$\epsilon=-1$ (c)&   & &  & X\\
\hline
\end{tabular}
\caption{Summary of the family of Scherk surfaces of Thm. \ref{t1} with the corresponding parametrizations.   The symbol X means that the case cannot happen.}\label{table}
\end{table}
 %%%%%%

\section{Acknowledgement} The   author   has been partially supported by MINECO/ MICINN/FEDER grant no. PID2023-150727NB-I00, and by the ``Mar\'{\i}a de Maeztu'' Excellence Unit IMAG, reference CEX2020-001105- M, funded by MCINN/AEI/ 10.13039/501100011033/ CEX2020-001105-M.

%%%%%%%%%%%%%%%%%%%%%%

\end{document}